\newtheorem{Theorem}{Theorem}[section]
\newtheorem{Corollary}[Theorem]{Corollary}
\newtheorem{Conjecture}[Theorem]{Conjecture}
\newtheorem{remark}[Theorem]{Remark}
\newtheorem{definition}[Theorem]{Definition}
\newtheorem{example}[Theorem]{Example}
\newtheorem{lemma}[Theorem]{Lemma}
\newtheorem{proposition}[Theorem]{Proposition}
\numberwithin{equation}{section}
\newcommand{\ZZ} {\mathbb{Z}}
\newcommand{\CC} {\mathbb{C}}
\newcommand{\PP} {\mathbb{P}}
\newcommand {\cO}  {\mathcal{O}}
\newcommand {\shF}  {\mathcal{F}}
\newcommand {\shI}  {\mathcal{I}}
\newcommand {\shO}  {\mathcal{O}}
\newcommand {\Ext}  {\operatorname{Ext}}
\newcommand {\Hom}  {\operatorname{Hom}}
\DeclareMathOperator {\hhh}{H}
\newcommand{\IP}{\mathbb P}
\newcommand{\ptwo}{\IP^2}
\newcommand{\pone}{\IP^1}
\newcommand{\mc}{\mathcal}
\DeclareMathOperator{\euler}{e}
\DeclareMathOperator{\tot}{Tot}
\DeclareMathOperator{\pic}{Pic}
\begin{document}

\title
[Local BPS Invariants]
{Local BPS Invariants: Enumerative Aspects and Wall-Crossing}

\author[J. Choi]{Jinwon Choi}
\address{Sookmyung Women's University, Cheongpa-ro 47-gil 100, Youngsan-gu, Seoul 04310, Republic of Korea}
\email{jwchoi@sookmyung.ac.kr}

\author[M. van Garrel]{Michel van Garrel}
\address{Mathematics Institute, Zeeman Building, University of Warwick, Coventry CV4 7AL, UK}
\email{michel.van-garrel@warwick.ac.uk}

\author[S. Katz]{Sheldon Katz}
\address{Department of Mathematics, MC-382, University of Illinois at Urbana-Champaign, Urbana, IL 61801, USA}
\email{katz@math.uiuc.edu}

\author[N. Takahashi]{Nobuyoshi Takahashi}
\address{Department of Mathematics, Graduate School of Science, Hiroshima University, 1-3-1 Kagamiyama, Higashi-Hiroshima, 739-8526 JAPAN}
\email{tkhsnbys@hiroshima-u.ac.jp}

\begin{abstract}
We study the BPS invariants for local del Pezzo surfaces, which can be obtained as the signed Euler characteristic of the moduli spaces of stable one-dimensional sheaves on the surface $S$. We calculate the Poincar\'{e} polynomials of the moduli spaces for the curve classes $\beta$ having arithmetic genus at most 2. We formulate a conjecture that these Poincar\'{e} polynomials are divisible by the Poincar\'{e} polynomials of $((-K_S).\beta-1)$-dimensional projective space. This conjecture motivates upcoming work on log BPS numbers \cite{logbps}. 
\end{abstract}

\maketitle

\section{Introduction}

Given a Calabi-Yau threefold $X$, physical reasoning was used in \cite{GV1,GV2} to produce the Gopakumar-Vafa invariants from moduli spaces of one-dimensional sheaves on $X$.  There are mathematical definitions of these invariants $n^g_\beta\in\ZZ$ given in \cite{katz} for $g=0$ and (consistently)  in \cite{MT} for general $g$.  In this paper, we refer to these invariants as the \emph{BPS invariants} of $X$, conjectured to be related to other enumerative invariants of $X$ as described in \cite{GV2}.

In this paper, we let $X$ be a local del Pezzo surface, the total space of the canonical bundle $K_S$ of a del Pezzo surface $S$.  
In this case, stable one-dimensional sheaves on $X$ can be identified with stable one-dimensional sheaves on $S$.

While we include calculations of the refined BPS indices and the higher genus BPS invariants in Section~\ref{sec:refined}, our main focus in this paper is on the genus zero BPS invariants $n_\beta=n^0_\beta$. Henceforth, \emph{local BPS invariants} shall mean the genus zero BPS invariants, unless specified otherwise. 

The moduli space of one-dimensional stable sheaves $M_\beta$ of class $\beta\in H_2(S,\ZZ)$ on $X$ is equipped with a symmetric obstruction theory. It follows from \cite{Behrend} that $n_\beta=\deg [M_\beta]^{\rm vir}$. In the local del Pezzo surface case, $M_\beta$ is smooth and hence the degree of the virtual cycle is given by the signed topological Euler characteristic of $M_\beta$. In the present paper, we calculate the Betti numbers of $M_\beta$. The results of our calculations, stated at the level of the local BPS invariants, are as follows.

\begin{Theorem}[Theorem~\ref{thm:localbps}] \label{thm:localcalc}
Let $\beta$ be either a line class, a conic class or a nef and big curve class on a del Pezzo surface $S$ of arithmetic genus $p_a(\beta)$ at most 2. Let $w=(-K_S).\beta$ and let $\eta$ be the maximum number of disjoint lines $l$ such that $\beta. l =0$. Then we have 
\begin{itemize}
\item[(i)] if $p_a(\beta)=0$, then $n_\beta= (-1)^{w-1} w$,
\item[(ii)] if $p_a(\beta)=1$ and $\beta\ne -K_{S_8}$, then $n_\beta= (-1)^{w-1} w (e(S)-\eta)$,
\item[(iii)] if $\beta=-K_{S_8}$, then $n_\beta=12$,
\item[(iv)] if $p_a(\beta)=2$ and $\beta\ne -2K_{S_8}$, then $n_\beta= (-1)^{w-1} w \left(\binom{\euler(S)-\eta}{2}+5\right)$.
\end{itemize}
Here, $e(-)$ denotes the topological Euler characteristics. 
\end{Theorem}

The main observation is that $n_\beta$ is divisible by $w$. Moreover, if we denote by $P_t(M)$ the Poincar\'{e} polynomial of a variety $M$, our calculations suggest the following. 

\begin{Conjecture}[Conjecture~\ref{conj:div}]\label{conj:div1}
Let $\beta$ be either a line class, a conic class or a nef and big curve class on a del Pezzo surface $S$. Let $w=(-K_S).\beta$. Then the Poincar\'{e} polynomial $P_t(M_\beta)$ has a factor $P_t(\PP^{w-1})$ and the quotient $P_t(M_\beta)/P_t(\PP^{w-1})$ is a palindromic polynomial. Consequently, $n_\beta$ is divisible by $w$. 
\end{Conjecture}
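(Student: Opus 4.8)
The plan is to separate the formal part of the statement from its single piece of geometric content. In the local del Pezzo setting $M_\beta$ is smooth and projective, so by Poincar\'e duality $P_t(M_\beta)$ is palindromic of degree $2\dim M_\beta$; and $P_t(\PP^{w-1})=1+t^2+\cdots+t^{2(w-1)}$ is palindromic as well. An exact quotient of two palindromic polynomials is again palindromic --- this follows at once from the defining functional equation --- so once divisibility holds the quotient $Q$ in $P_t(M_\beta)=P_t(\PP^{w-1})\,Q(t)$ is automatically palindromic, and it lies in $\ZZ[t]$ because $P_t(\PP^{w-1})$ is primitive (Gauss's lemma). Evaluating at $t=-1$ then gives $e(M_\beta)=P_{-1}(M_\beta)=w\,Q(-1)$ with $Q(-1)\in\ZZ$, so $w\mid e(M_\beta)=\pm n_\beta$. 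Both the palindromicity and the divisibility of $n_\beta$ are therefore formal, and the whole conjecture reduces to showing $P_t(\PP^{w-1})\mid P_t(M_\beta)$ in $\ZZ[t]$.

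To produce the factor $P_t(\PP^{w-1})$ I would look to the anticanonical boundary. Fix a smooth curve $D\in|-K_S|$; it has genus one, and $\cO_D(\beta)=\cO_S(\beta)|_D$ has degree $\beta.(-K_S)=w$, so by Riemann--Roch on $D$ its complete linear system is $|\cO_D(\beta)|\cong\PP^{w-1}$. As the support curve $C$ of a sheaf in $M_\beta$ varies in $|\beta|$, the intersection divisor $C\cap D$ sweeps out $|\cO_D(\beta)|$, giving a boundary map $M_\beta\to|\beta|\dashrightarrow\PP^{w-1}$ induced by the restriction $H^0(S,\cO_S(\beta))\to H^0(D,\cO_D(\beta))$. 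When $p_a(\beta)=0$ this restriction is an isomorphism, the sheaf is determined by its support, and $M_\beta\cong|\beta|\cong\PP^{w-1}$; this is case (i) of Theorem~\ref{thm:localcalc}, where the factor appears on the nose with $Q=1$. In general the restriction has kernel $H^0(S,\cO_S(\beta+K_S))$ of dimension $p_a(\beta)$, so the boundary map is a linear projection whose center is the $\PP^{p_a(\beta)-1}$ of curves containing $D$ as a component.

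The remaining task is to upgrade this to a cohomological statement. The plan is to introduce the relative moduli space $M_\beta^{\dagger}$ of sheaves rigidified along $D$ --- the object underlying the log BPS theory of the pair $(S,D)$ of \cite{logbps} --- and to realize $M_\beta$, at least at the level of cohomology, as a $\PP^{w-1}$-bundle over the smooth projective $M_\beta^{\dagger}$. Exhibiting a class $\ell\in H^2(M_\beta)$ restricting to the hyperplane generator on each fibre would then yield, by Leray--Hirsch, a free-module decomposition $H^*(M_\beta)\cong H^*(M_\beta^{\dagger})\otimes H^*(\PP^{w-1})$ and hence $P_t(M_\beta)=P_t(\PP^{w-1})\,P_t(M_\beta^{\dagger})$, identifying $Q$ with the (palindromic) Poincar\'e polynomial of $M_\beta^{\dagger}$. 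The hard part is the globalization: for $p_a(\beta)\ge 1$ the boundary map is only rational, its fibres jump over the locus of support curves containing $D$ and over non-reduced curves, and these directions are entangled with the compactified-Jacobian directions of the support morphism $M_\beta\to|\beta|$. The separate treatment of the anticanonical classes on $S_8$ in Theorem~\ref{thm:localcalc} already signals how delicate these degenerations are. Controlling $H^*(M_\beta)$ uniformly across them --- presumably by applying the decomposition theorem to the support morphism and combining it with the fibrewise $\PP^{w-1}$ of the system $|\cO_D(\beta)|$ --- is the crux, and is precisely what the low-genus computations verify case by case but do not yet establish in general.
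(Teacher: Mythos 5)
The statement you are addressing is a conjecture; the paper establishes it only for $p_a(\beta)\le 2$, and by a route entirely different from yours: blowdown invariance (Proposition~\ref{prop:pullback}) together with wall-crossing from PT stable pairs, where the factor $P_t(\PP^{w-1})$ enters through the projective bundle $P_1(S,\beta)\to\mathrm{Hilb}^{p_a}(S)$ of Proposition~\ref{prop:PT} and must then be tracked through the wall-crossing corrections. Note that in the genus $2$ computation (Proposition~\ref{prop:ng2}) the individual correction terms $P_t(\PP^{w+1})$ and $t^2P_t(\PP^{w-2})P_t(\PP^{1})$ are \emph{not} separately divisible by $P_t(\PP^{w-1})$; only the total is, so even in the verified cases the divisibility appears at the end of the computation rather than from a visible bundle structure. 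That said, your formal reductions are correct and are a genuine contribution the paper does not make explicit: smoothness and projectivity of $M_\beta$ (Proposition~\ref{prop:bps equiv}) give palindromicity of $P_t(M_\beta)$ via Poincar\'e duality, an exact quotient of palindromic polynomials is palindromic, monicity of $P_t(\PP^{w-1})$ forces the quotient into $\ZZ[t]$, and evaluation then yields $w\mid n_\beta$. You have thus correctly isolated the divisibility $P_t(\PP^{w-1})\mid P_t(M_\beta)$ as the entire content of the conjecture.

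The gap is that divisibility itself, and it is not a loose end but the whole theorem. Your proposed mechanism --- projecting the support curve to the degree-$w$ linear system $|\cO_D(\beta)|\cong\PP^{w-1}$ on a smooth anticanonical $D$ --- is only a rational map once $p_a(\beta)\ge 1$ (its center is the locus of curves containing $D$, and it ignores the compactified-Jacobian directions entirely), and no object $M_\beta^{\dagger}$ is constructed, no smoothness or properness for it is argued, and no class $\ell\in H^2(M_\beta)$ with the Leray--Hirsch property is produced. Worse, a literal $\PP^{w-1}$-bundle structure on $M_\beta$ cannot exist in general: the paper points out, citing \cite{cc2}, that already for $S=\PP^2$ the space $M_\beta$ is not a projective bundle but only birational to one, so at best one could hope for a module-theoretic splitting of $H^*(M_\beta)$ over $H^*(\PP^{w-1})$ --- which is exactly what remains unproven. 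The exceptional behaviour at $\beta=-K_{S_8}$ and $-2K_{S_8}$, where $|-K_{S_8}|$ has a base point and the restriction-to-$D$ picture degenerates, is a further warning that the argument cannot be uniform. As it stands your proposal proves the conjecture only in the $p_a(\beta)=0$ case (where it recovers Proposition~\ref{prop:g0}) and otherwise reduces it to an unestablished geometric input; to match what the paper actually proves you would need to carry out the wall-crossing computations of Section~\ref{sec:localcal}, or else supply the Leray--Hirsch-type decomposition you sketch, which is closer to the program of \cite{logbps} than to anything in this paper.
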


When $S=\PP^2$, Conjecture~\ref{conj:div1} is shown to be true whenever $P_t(M_\beta)$ or $n_\beta$ is calculated. See for example \cite{cc, cc2} for calculations of $P_t(M_\beta)$ up to degree 6 and \cite[\S 8.3]{kkv} for a calculation of $n_\beta$ up to degree 10. In this paper, we prove that Conjecture~\ref{conj:div1} holds for all del Pezzo surfaces and $\beta$ with $p_a(\beta)\le 2$.

In \cite{tak compl, tak log ms}, it was observed that up to degree 8, the log BPS numbers for $\PP^2$ agree with counts of rational curves of given degree which intersect a fixed smooth elliptic curve $E$ on $\PP^2$ only at a given special point and are smooth at this point. In a sequel \cite{logbps}, we generalize this idea to give a rigorous direct definition for the log BPS numbers by using the log GW theory for the pair of a del Pezzo surface $S$ and a smooth anticanonical curve $E$ on $S$. 
In a different direction, \cite[Conjecture 44]{Bou} stipulates a relationship, after a change of variable, of $P_t(M_\beta)$ with a generating function of certain higher genus log Gromov-Witten invariants. Combining \cite{Bou} and \cite{logbps} suggests a reconstruction result of higher genus log Gromov-Witten invariants in terms of genus 0 invariants.

Our strategy to prove Theorem~\ref{thm:localcalc} is as follows. We use the wall-crossing in the moduli space of $\delta$-stable pairs. The same strategy is used in \cite{cc} to study $M_\beta$ when $S=\PP^2$. See \S~\ref{sec:pairs} for a review of $\delta$-stable pair theory. When $\delta$ is sufficiently large, the space of $\delta$-stable pairs is isomorphic to the space of pairs $(C,Z)$ of curves $C$ in class $\beta$ and 0-dimensional subschemes $Z\subset C$. The latter space is a projective bundle over a Hilbert scheme provided $\beta$ is sufficiently very ample (Proposition~\ref{prop:PT}). More precisely, in our main situation of pairs with holomorphic Euler characteristic $1$, the latter space is a projective bundle if $\beta$ is very ample (resp. base point free) when the arithmetic genus of $\beta$ is $2$ (resp. $1$).

It is known that $\beta$ is very ample (resp.\ base point free) 
if and only if $\beta$ has positive (resp.\ nonnegative) intersection 
with all lines (i.e.\ $(-1)$-curves) on $S$ and $\beta\not=-2K_{S_8}$ 
(resp.\ $\beta\not=-K_{S_8}$). 
We show that the moduli space $M_\beta$ remains unchanged under blowing down $S$ along a $(-1)$-curve and taking the pushforward of $\beta$. After blowing down all $(-1)$-curves $l$ with $\beta.l=0$, the moduli space of stable pairs can be computed and hence $M_\beta$ can also be computed through wall-crossing.

The rest of this paper is organized as follows. In \S~\ref{prelim}, we collect basic facts about curve classes on del Pezzo surfaces. In \S~\ref{sec:local}, we start by 
reviewing stability of one-dimensional sheaves and give a definition of local BPS invariants in \S~\ref{sec:bps}. We prove the blowup property of BPS invariants in \S~\ref{sec:blowup}. We review the theory of $\delta$-stable pairs and wall-crossing in \S~\ref{sec:pairs}. In \S\ref{sec:localcal}, we compute the Poincar\'{e} polynomial of $M_\beta$ and prove Theorem~\ref{thm:localcalc}. Throughout this paper, we work over $\CC$.

\addtocontents{toc}{\protect\setcounter{tocdepth}{0}}
\section*{Acknowledgements}
\addtocontents{toc}{\protect\setcounter{tocdepth}{1}}

We would like to thank Pierrick Bousseau, Kiryong Chung, David Eisenbud, Tom Graber, Mark Gross, Young-Hoon Kiem, Martijn Kool, Davesh Maulik, Rahul Pandharipande, Helge Ruddat, Bernd Siebert, Jan Stienstra and Richard Thomas for enlightening conversations on several aspects relating to this work. A good amount of the present work was done while varying subsets of the authors were at or visiting the Korea Institute for Advanced Study (KIAS). We thank KIAS for its hospitality and the excellent research environment. JC is supported by the Sookmyung Women's University Research Grants (1-1603-2039), Korea NRF grant NRF-2018R1C1B6005600 and NRF-2015R1C1A1A01054185. MvG is supported by the German Research Foundation DFG-RTG-1670 and the European Commission Research Executive Agency MSCA-IF-746554. SK is supported in part by NSF grant DMS-1502170 and NSF grant DMS-1802242, as well as by NSF grant DMS-1440140 while in residence at MSRI in Spring, 2018. NT is supported by JSPS KAKENHI Grant Number JP17K05204.

\section{Preliminaries}\label{prelim}
In this section, we collect basic facts about curve classes on del Pezzo surfaces. Let $S$ be a del Pezzo surface. Denote by $S_r$ the blowup of $\ptwo$ along $r$ general points. Then $S$ is either $S_r$ for $0\le r \le 8$ or $\pone\times\pone$. We will mainly consider the case $S=S_r$ and will make remarks for for $\pone\times\pone$ separately whenever needed. The results of this paper hold for $\pone\times\pone$ as well. 

\begin{definition}
A class $\beta\in \hhh_2(S,\ZZ)$ is a curve class if it can be represented by a nonempty subscheme of dimension one. We often consider $\beta$ as a divisor on $S$.
\end{definition}

Since del Pezzo surfaces are rational, by Poincar\'e duality, $\pic(S)\cong\hhh_2(S,\ZZ)$. So when we write $|\shO_S(\beta)|$ or simply $|\beta|$, we mean the complete linear system $|L|$ for the unique $L\in\pic(S)$ such that $c_1(L)=\beta$.

For $S_r$, let $h$ be the pullback of $\mathcal{O}_{\ptwo} (1)$ and let $e_i$ for $1\le i\le r$ be the exceptional divisors. The Picard group $\mathrm{Pic}(S_r)$ is generated by $h$ and the $e_i$'s. We use the notation $(d; a_1,\cdots ,a_r)$ for the divisor $dh-\sum a_i e_i$. When there are repetitions in the $a_i$'s, we sometimes use superscripts to indicate the number of repetitions. For example, $(1;1^2)$ means the class $h-e_1-e_2$. The anticanonical divisor is $-K_{S_r}=(3;1^r)$. 

For $\pone\times\pone$, we denote by $h_1$ and $h_2$ the pullback of $\mathcal{O}_{\pone} (1)$ from each factor. The anticanonical divisor is $-K_{\pone\times\pone}=2h_1+2h_2$.

\begin{definition}
  A \emph{line class} on $S$ is a class $l\in \mathrm{Pic}(S)$ such that $l^2=-1$ and $(- K_S).l=1$. 
\end{definition}

It is well-known that each line class contains a unique irreducible line and there are only finitely many lines on $S$. 
\begin{example}
  By numerical calculation, we list all line classes up to permutation of the $e_i$'s: 
  \[e_i, (1; 1^2) ,(2; 1^5),(3; 2, 1^6), (4; 2^3, 1^5) , (5; 2^6, 1^2), (6; 3, 2^7). \]
\end{example}

\begin{definition}
  A divisor $D$ on $S$ is said to be \emph{nef} if $D.C\ge 0$ for any curve $C$. A nef divisor $D$ is said to be \emph{big} if in addition $D^2>0$. 
\end{definition}

\begin{definition}
  A line bundle $L$ on $S$ is said to be \emph{$k$-very ample} for an integer $k\ge 0$ if given any 0-dimensional subscheme $Z$ of $S$ of length $k+1$, the restriction map $H^0(L)\to H^0(L|_Z)$ is surjective. A divisor $D$ is said to be \emph{$k$-very ample} if the associated line bundle is $k$-very ample. 
\end{definition}
Note that $0$-very ample divisors are globally generated divisors and $1$-very ample divisors are very ample divisors. Di Rocco in \cite{Rocco} found the following numerical criterion for $k$-very ampleness on a del Pezzo surface.

\begin{lemma}[{\cite{Rocco},\cite[(2.1.1)]{BS}}]\label{lem:Rocco}
Let $D\in \mathrm{Pic}(S)$ be a divisor and $k\ge 0$ be an integer. Suppose that $D\ne -kK_{S_8}$ and $D\ne -(k+1)K_{S_8}$ and that $D\ne -K_{S_7}$ when $k=1$. Then $D\in \mathrm{Pic}(S)$ is $k$-very ample if and only if
\begin{itemize}
\item[(i)] when $S=\PP^2$, $D.h \ge k$,
\item[(ii)] when $S=\pone\times\pone$, $D.h_i\ge k$ for $i=1,2$,   
\item[(iii)] when $S=S_1$, $D. l\ge k$ for any line class $l$ and $D.(h-e_1)\ge k$,
\item[(iv)] when $S=S_r$ for $r \ge 2$, $D. l\ge k$ for any line class $l$.
\end{itemize} 
\end{lemma}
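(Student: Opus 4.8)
\emph{Proof sketch.} The plan is to prove the two implications separately: necessity by restricting to suitable test curves, and sufficiency by means of a Reider-type criterion for $k$-very ampleness. Throughout write $L=\shO_S(D)$.

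For necessity, observe that each relevant class is represented by a smooth rational curve: the line $h$ on $\ptwo$, the two rulings $h_1,h_2$ on $\pone\times\pone$, the line $e_1$ together with the ruling fibre $h-e_1$ on $S_1$, and the line classes on $S_r$ for $r\ge 2$. Fix such a curve $C\cong\pone$ and suppose $D.C<k$. Choosing any length-$(k+1)$ subscheme $Z\subset C$, the restriction $H^0(L)\to H^0(L|_Z)$ factors through $H^0(L|_C)\to H^0(L|_Z)$; since $L|_C\cong\shO_{\pone}(D.C)$ we have $\dim H^0(L|_C)=\max(D.C+1,0)\le k<k+1=\dim H^0(L|_Z)$, so the second map, and hence the composite, cannot be surjective. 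Thus $D$ is not $k$-very ample, which gives the stated numerical conditions as necessary. This also explains why no separate condition on $h-e_1$ is needed once $r\ge2$: writing $h-e_1=(h-e_1-e_2)+e_2$ as a sum of two line classes yields $D.(h-e_1)\ge 2k\ge k$ automatically, whereas the ruling fibre must be added for $S_1$, where $\NE(S_1)$ is generated by $e_1$ and $h-e_1$ rather than by lines alone.

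For sufficiency I would set $A:=D-K_S=D+(-K_S)$. The numerical hypotheses say exactly that $D$ is nef, since the Mori cone of each surface is generated by the listed test classes, so $A$ is ample, being the sum of a nef class and the ample class $-K_S$. Provided $A^2$ is large enough, the Reider-type theorem of Beltrametti--Francia--Sommese guarantees that $D=K_S+A$ is $k$-very ample unless there is an effective divisor $E$ with
\[
A.E-k-1\le E^2<\tfrac12 A.E<k+1.
\]
I would then decompose $E$ into irreducible components and use the intersection theory of del Pezzo surfaces, together with $D.l\ge k$ for all lines (and $D.(h-e_1)\ge k$ on $S_1$), to bound $A.E=D.E+(-K_S).E$ from below and $E^2$ from above, contradicting the displayed inequalities.

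The main obstacle is this last piece of book-keeping in the sufficiency direction: one must verify, uniformly across $\ptwo$, $\pone\times\pone$ and all $S_r$, that positivity of $D$ against every line (and the ruling fibre on $S_1$) forces $A.E$ to be too large relative to $E^2$ for any effective $E$ to satisfy the Reider inequalities. This is also precisely where the excluded classes enter. The divisors $-kK_{S_8}$ and $-(k+1)K_{S_8}$, together with $-K_{S_7}$ when $k=1$, all satisfy the numerical conditions, since $(-K_S).l=1$ for every line $l$, yet they genuinely fail to be $k$-very ample: $|-K_{S_8}|$ has a base point, while $|-2K_{S_8}|$ and $|-K_{S_7}|$ are base-point free but not very ample, and for the relevant multiples $A^2$ is too small to enter the Reider range. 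These finitely many exceptional classes must therefore be removed from the numerical criterion and analysed by hand, which is exactly what the hypotheses excluding them accomplish.
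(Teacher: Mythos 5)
The paper gives no proof of this lemma: it is imported verbatim from Di~Rocco \cite{Rocco} and Beltrametti--Sommese \cite{BS}, so there is no in-paper argument to compare yours against. Your sketch does follow the strategy of those references. The necessity half is complete and correct: each listed class is a smooth rational curve, and if $D.C<k$ then $h^0(L|_C)=\max(D.C+1,0)\le k<k+1=h^0(L|_Z)$ for $Z\subset C$ of length $k+1$, so restriction cannot be surjective. Your observation that $h-e_1$ splits as a sum of two line classes once $r\ge 2$, so that only $S_1$ needs the extra ruling condition, is also correct, as is the identification of the excluded classes as exactly the numerically admissible divisors that fail $k$-very ampleness ($|-K_{S_8}|$ has a base point; $|-2K_{S_8}|$ and $|-K_{S_7}|$ are base-point free but not very ample).

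The sufficiency half, however, is a plan rather than a proof, and the gap is not merely clerical. The Beltrametti--Francia--Sommese criterion you invoke requires $A^2\ge 4k+5$ for $A=D-K_S$, and this hypothesis genuinely fails for many of the classes the lemma must cover on low-degree del Pezzo surfaces (e.g.\ on $S_8$, where $K_{S_8}^2=1$, already $A=-2K_{S_8}$ has $A^2=4<5$ for $k=0$); ``provided $A^2$ is large enough'' therefore excludes more than the three exceptional divisors in the hypotheses, and those remaining cases need a separate treatment. Likewise, the elimination of effective divisors $E$ satisfying $A.E-k-1\le E^2<\tfrac12 A.E<k+1$ is deferred entirely, and that case analysis --- decomposing $E$ into $(-1)$-curves and ruling fibres and playing the inequalities against $D.l\ge k$ --- is the actual content of Di~Rocco's proof. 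As it stands your argument establishes only the ``only if'' direction; to make it a proof you would need to either carry out the Reider-type book-keeping including the small-$A^2$ cases, or simply cite \cite{Rocco} and \cite{BS} as the paper does.
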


\begin{lemma}[{\cite{Rocco}}]\label{lem:Rocconef}
An effective divisor $D\in \mathrm{Pic}(S)$ is nef if and only if it is $0$-very ample, except for the case $D=-K_{S_8}$, which is nef but not $0$-very ample. 
\end{lemma}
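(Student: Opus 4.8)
The plan is to reduce the statement to Di Rocco's numerical criterion (Lemma~\ref{lem:Rocco} with $k=0$), exploiting the fact that every inequality appearing in that criterion is the intersection of $D$ with an \emph{effective} curve class, and is therefore automatically satisfied once $D$ is assumed nef.

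First I would record the easy implication, which holds on any projective variety and introduces no exceptional class. If $\shO_S(D)$ is globally generated, equivalently $D$ is $0$-very ample, then for every irreducible curve $C$ the restriction $\shO_S(D)|_C$ is globally generated on $C$, hence has a nonzero global section and thus nonnegative degree; so $D.C = \deg\left(\shO_S(D)|_C\right) \geq 0$ for all $C$ and $D$ is nef.

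For the converse I would take $D$ effective and nef with $D \neq -K_{S_8}$. The case $D = 0$ is trivial, as $\shO_S$ is at once nef and $0$-very ample, so I assume $D \neq 0$ and apply Lemma~\ref{lem:Rocco}. It suffices to check that $D$ satisfies the clause among (i)--(iv) appropriate to the geometry of $S$. In each case the classes being tested are represented by effective, indeed irreducible, curves: the hyperplane class $h$ on $\PP^2$; the two rulings $h_1,h_2$ on $\pone\times\pone$; the line $e_1$ together with the fiber class $h-e_1$ on $S_1=\mathbb{F}_1$; and the finitely many line classes on $S_r$ for $r\geq 2$. Nefness of $D$ forces $D.C\geq 0$ for each of these, so Di Rocco's criterion applies and yields that $D$ is $0$-very ample. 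This gives the equivalence for every effective $D\neq -K_{S_8}$.

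It remains to confirm the single exception. Since $S_8$ is del Pezzo, $-K_{S_8}$ is ample and hence nef, whereas the anticanonical pencil $|-K_{S_8}|$ has a base point, so $-K_{S_8}$ is not globally generated and fails to be $0$-very ample; this is precisely the class excluded from Lemma~\ref{lem:Rocco}. I do not expect a genuine obstacle here, since Lemma~\ref{lem:Rocco} supplies all the substantive content. The only point that demands care is the bookkeeping: checking that each inequality in Di Rocco's list really is ``$D$ against an effective curve'', and that the two degenerate inputs $D=0$ and $D=-K_{S_8}$ are exactly where nefness and $0$-very ampleness can come apart.
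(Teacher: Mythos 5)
Your proof is correct and matches the intended justification: the paper gives no argument of its own for this lemma, simply citing \cite{Rocco}, and your reduction to Lemma~\ref{lem:Rocco} with $k=0$ (noting that every class tested there is effective, so nefness supplies the inequalities), together with the standard ``globally generated implies nef'' direction and the base point of $|-K_{S_8}|$, is precisely the content behind that citation.
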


\begin{lemma}[{\cite[\S 2.3 (P5, P7)]{Knut}}]\label{lem:nefbig}
If $D$ is base point free, then $\hhh^i(D)=0$ for $i=1,2$. Furthermore, if $D$ is nef and big, then a general member of $|D|$ is smooth and irreducible. 
\end{lemma}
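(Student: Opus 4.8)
The plan is to deduce both assertions from standard vanishing theorems, which are available since we work over $\CC$.

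For the first assertion I would first observe that a base point free divisor is globally generated, hence nef: for every curve $C\subset S$ the restriction $\mathcal{O}_S(D)|_C$ is globally generated, so $D.C\ge 0$. Since $S$ is del Pezzo, $-K_S$ is ample, and therefore $D-K_S=D+(-K_S)$ is the sum of a nef and an ample divisor, hence ample. Writing $D=K_S+(D-K_S)$ and applying Kodaira vanishing gives $\hhh^i(D)=\hhh^i(K_S+(D-K_S))=0$ for all $i>0$, which is precisely the claimed vanishing for $i=1,2$. (Note that only the nefness of $D$ is used here, so this part holds for all nef $D$.)

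For the second assertion I would establish smoothness and connectedness separately. By Lemma~\ref{lem:Rocconef}, an effective nef divisor is base point free unless $D=-K_{S_8}$; when $D$ is base point free, Bertini's theorem in characteristic $0$ shows that a general member of $|D|$ is smooth. The exceptional class $D=-K_{S_8}$ is nef and big, since $(-K_{S_8})^2=1>0$, but the pencil $|-K_{S_8}|$ has a single base point, so Bertini does not apply directly; here I would argue smoothness of the general member separately, using the classical fact that blowing up the base point resolves the pencil into the elliptic fibration of a rational elliptic surface, whose general fibre is smooth. To obtain connectedness, consider the structure sequence $0\to\mathcal{O}_S(-D)\to\mathcal{O}_S\to\mathcal{O}_C\to 0$ of a general member $C\in|D|$. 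Since $D$ is nef and big, it is effective and nonzero (indeed $\hhh^0(D)=\chi(D)=1+\tfrac12 D^2+\tfrac12 D.(-K_S)>1$ by the vanishing above), so $-D$ is not effective and $\hhh^0(\mathcal{O}_S(-D))=0$. By Serre duality $\hhh^1(\mathcal{O}_S(-D))\cong\hhh^1(K_S+D)^\vee$, which vanishes by Kawamata--Viehweg vanishing because $D$ is nef and big. The long exact sequence then forces $\hhh^0(\mathcal{O}_C)\cong\hhh^0(\mathcal{O}_S)=\CC$, so $C$ is connected.

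Finally, since the general member is smooth (hence reduced and locally irreducible) and connected, it is irreducible, which completes the argument. The only genuinely delicate point is the smoothness of the general member in the non--base-point-free case $D=-K_{S_8}$; every other step reduces to Bertini's theorem together with Kodaira and Kawamata--Viehweg vanishing.
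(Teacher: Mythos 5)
Your proof is correct, but note that the paper does not actually prove this lemma: it is quoted verbatim from Knutsen's list of standard properties of divisors on del Pezzo surfaces (\cite[\S 2.3 (P5, P7)]{Knut}), so there is no in-paper argument to compare against. What you supply is a clean self-contained derivation from standard vanishing theorems: base point free $\Rightarrow$ nef $\Rightarrow$ $D-K_S$ ample $\Rightarrow$ Kodaira vanishing for the first claim; and Bertini plus the connectedness argument via $0\to\shO_S(-D)\to\shO_S\to\shO_C\to 0$, Serre duality and Kawamata--Viehweg for the second. All steps check out, including the small prerequisites you use implicitly ($h^0(D)=\chi(D)>1$ for nef and big $D$, so $D$ is effective and $-D$ is not). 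The one genuinely delicate point is exactly the one you flag, namely smoothness of the general member of $|-K_{S_8}|$, where Bertini only controls the behaviour away from the base point; your appeal to the rational elliptic fibration works, and can be shortened by observing that two general members meet with total intersection number $(-K_{S_8})^2=1$, which forces multiplicity one (hence smoothness) at the base point. So your argument is a valid replacement for the citation; it buys self-containedness at the cost of invoking Kawamata--Viehweg, whereas the paper simply outsources the statement to the literature.
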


\begin{definition}
For $\beta\in  \mathrm{Pic}(S)$, we denote by $p_a(\beta)$ the arithmetic genus of $\beta$. By adjunction we have
\[ p_a(\beta)= \frac12 \beta(\beta+K_S) +1.\] 
\end{definition}

\begin{definition}
A \emph{conic class} on $S$ is a class $D\in \mathrm{Pic}(S)$ such that $p_a(D)=0$ and $(-K_S).D=2$.
\end{definition}

By the remark after Corollary 3.3 of \cite{tvv}, nef but non-big divisors on $S$ are multiples of conic classes. The complete linear system of a conic class $D$ has projective dimension one, which gives a ruling $S\to \pone$. The fiber class of this ruling is $D$. 

\begin{example} \label{exam:g0w2}
The list of all conic classes is obtained by numerical calculation as follows (up to permutations of the $e_i$'s). See also \cite[Appendix A]{kkv}.
\begin{align}
&(1; 1) , (2; 1^4), (3; 2, 1^5) , (4; 2^3, 1^4),(4; 3, 1^7), \notag \\ 
& (5; 2^6, 1), (5; 3, 2^3, 1^4), (6; 3^2, 2^4, 1^2) , (7; 3^4, 2^3, 1), (7; 4, 3, 2^6) ,\label{g0d2cl}\\
& (8; 3^7, 1),(8; 4, 3^4, 2^3), (9; 4^2, 3^5, 2),  (10; 4^4, 3^4),  (11; 4^7, 3). \notag
\end{align}
\end{example}

\section{Local BPS invariants and Pairs}\label{sec:local}
\subsection{BPS invariants}\label{sec:bps}
Let $X$ be a Calabi-Yau threefold, and fix an ample line bundle $L$ on $X$. The Hilbert polynomial of $F$ is defined by $\chi(F\otimes L^m)$. In case $X=\tot(K_S)$, we consider coherent sheaves $F$ on $X$ set-theoretically supported on $S$, so that we can consider its homology class in $\hhh_2(S, \mathbb{Z})$.
\begin{definition}\label{def:stable}
A sheaf $F$ supported on a curve of class $\beta$ is called \emph{stable} if 
\begin{itemize}
\item[(a)] $F$ is pure, i.e., $F$ has no zero dimensional subsheaves. 
\item[(b)] For any proper nonzero subsheaf $G$ of $F$, we have \[ \frac{\chi(G)}{r(G)} <\frac{\chi(F)}{r(F)},\]
where $r(F)$ is the linear coefficient of the Hilbert polynomial of $F$. 
\end{itemize} 
\end{definition}

\begin{definition}
We denote by $M_{\beta,n}$ the moduli space of stable sheaves $F$ on $S$ with $\chi(F)=n$ and $[F](:=c_1(F))=\beta$. When $n=1$, we simply write $M_\beta=M_{\beta,1}$. 
\end{definition}

When $X$ is a smooth projective variety, $M_\beta$ is projective. The moduli space $M_\beta$ carries a symmetric obstruction theory \cite{katz}, and hence a virtual invariant is well defined. It is known that this virtual invariant is independent of the choice of the ample line bundle $L$. See for example \cite[Lemma 4.8]{Toda}. For a del Pezzo surface $S$, we take $L=-K_S$. Note that when $\chi(F)=1$, a proper nonzero subsheaf $G$ of $F$ is destabilizing if and only if $\chi(G)\ge 1$. The following proposition is well-known (See \cite{LeP}).

\begin{proposition} \label{prop:bps equiv}\noindent
\begin{itemize}
\item[(i)] Provided it is nonempty, $M_\beta$ is smooth of dimension $\beta^2+1$. 
\item[(ii)] $n_\beta=(-1)^{\beta^2+1} e(M_\beta)$.
\end{itemize} 
\end{proposition}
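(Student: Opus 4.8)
The statement is standard (as the authors note, see \cite{LeP}); I sketch the argument in the order I would carry it out. For (i), fix $[F]\in M_\beta$ and recall that, under the identification of one-dimensional sheaves on $X$ supported on $S$ with sheaves on $S$, the Zariski tangent space to $M_\beta$ at $[F]$ is $\Ext^1_S(F,F)$ and the obstructions lie in $\Ext^2_S(F,F)$. The heart of the matter is the vanishing $\Ext^2_S(F,F)=0$, which forces $M_\beta$ to be unobstructed, hence smooth. By Serre duality on the surface $S$ one has $\Ext^2_S(F,F)\cong\Hom_S(F,F\otimes K_S)^\vee$, so it suffices to show $\Hom_S(F,F\otimes K_S)=0$. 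Here the del Pezzo hypothesis is essential. Polarizing by $-K_S$, a pure one-dimensional sheaf of class $\gamma$ has leading Hilbert coefficient $\gamma\cdot(-K_S)>0$, and the slope $\mu(-):=\chi(-)/r(-)$ satisfies $\mu(G\otimes K_S)=\mu(G)-1$ for every such $G$, since $(\gamma\cdot K_S)/(\gamma\cdot(-K_S))=-1$. Thus tensoring by $K_S$ lowers all $(-K_S)$-slopes uniformly by $1$, so $F\otimes K_S$ is again stable with $\mu(F\otimes K_S)=\mu(F)-1<\mu(F)$. A nonzero homomorphism $\phi\colon F\to F\otimes K_S$ would have image $I$ that is a quotient of the stable sheaf $F$, whence $\mu(I)\geq\mu(F)$, and a subsheaf of the stable sheaf $F\otimes K_S$, whence $\mu(I)\leq\mu(F\otimes K_S)<\mu(F)$; this contradiction gives $\Hom_S(F,F\otimes K_S)=0$.

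With smoothness in hand I would compute the dimension. Stability makes $F$ simple, so $\dim\Hom_S(F,F)=1$, and combined with $\Ext^2_S(F,F)=0$ this yields $\dim\Ext^1_S(F,F)=1-\chi(F,F)$. A Hirzebruch--Riemann--Roch computation on $S$, using $\mathrm{ch}(F)=(0,\beta,\mathrm{ch}_2(F))$ and the self-pairing $\mathrm{ch}(F)^\vee\cdot\mathrm{ch}(F)=(0,0,-\beta^2)$, gives $\chi(F,F)=-\beta^2$ (the $\mathrm{ch}_2$ and Todd contributions cancel). Hence $\dim T_{[F]}M_\beta=\dim\Ext^1_S(F,F)=\beta^2+1$, and by smoothness this equals $\dim M_\beta$, proving (i).

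For (ii) I would use the symmetric obstruction theory on $M_\beta$ (as in \cite{katz}) together with Behrend's theorem \cite{Behrend}, which expresses $\deg[M_\beta]^{\mathrm{vir}}$ as the weighted Euler characteristic $\chi(M_\beta,\nu_{M_\beta})$ for the Behrend function $\nu_{M_\beta}$. On the smooth scheme $M_\beta$ this function is the constant $(-1)^{\dim M_\beta}$; concretely, a symmetric obstruction theory on a smooth space has virtual dimension $0$ and obstruction bundle $\Omega^1_{M_\beta}$, so $[M_\beta]^{\mathrm{vir}}=c_{\mathrm{top}}(\Omega^1_{M_\beta})\cap[M_\beta]=(-1)^{\dim M_\beta}\,c_{\mathrm{top}}(T_{M_\beta})\cap[M_\beta]$, whose degree is $(-1)^{\dim M_\beta}e(M_\beta)$. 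Substituting $\dim M_\beta=\beta^2+1$ from (i) yields $n_\beta=(-1)^{\beta^2+1}e(M_\beta)$.

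The one genuinely substantive step is the vanishing $\Ext^2_S(F,F)=0$; the rest is Riemann--Roch bookkeeping and an appeal to Behrend. The care needed there is to perform the slope comparison with respect to the fixed polarization $-K_S$ and to record that $F\otimes K_S$ remains stable, so that both the quotient and the subsheaf slope inequalities are available. It is precisely the ampleness of $-K_S$ that produces the uniform slope shift, and hence the strict gap that kills $\Hom_S(F,F\otimes K_S)$.
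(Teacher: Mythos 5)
Your proof is correct and follows essentially the same route as the paper: Serre duality reduces smoothness to $\Hom(F,F\otimes K_S)=0$ (you spell out the slope argument that the paper delegates to \cite[Proposition 1.2.7]{HL}), the dimension comes from the same Riemann--Roch computation $\chi(F,F)=-\beta^2$, and (ii) follows from the symmetric obstruction theory on the smooth moduli space. The only difference is cosmetic: the paper routes the identification $n_\beta=\deg[M_\beta(X)]^{\rm vir}$ through Toda's GW/PT result, whereas you take Katz's virtual-cycle definition directly and evaluate it via Behrend, which amounts to the same computation $(-1)^{\dim}e(M_\beta)$.
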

In Proposition~\ref{prop:bps equiv} and below, $\beta^2$ denotes the self-intersection $\beta.\beta$.
\begin{proof}
The obstruction at $F\in M_\beta$ is given by $\Ext^2(F,F)$. By Serre duality, we have 
\[
\Ext^2(F,F)= \Hom(F, F\otimes K_S)^\vee.
\]
The latter space is zero because $F$ is stable with respect to $-K_S$(\cite[Proposition 1.2.7]{HL}). Therefore $M_\beta$ is smooth. Moreover by Riemann-Roch, 
\[ \chi(F,F)=1-\mathrm{ext}^1(F,F)= \int_S \mathrm{ch}^\vee(F)\mathrm{ch}(F) \mathrm{td}(S)=-\beta^2.\]
The dimension of $M_\beta$ at $F$ is $\mathrm{ext}^1(F,F)=\beta^2+1$.  

Let $X$ be the total space of $K_S$ and let $M_\beta(X)$ be the moduli space of stable sheaves on $X$ with the same numerical condition as $M_\beta$. It is elementary to show that $M_\beta(X)$ is in fact equal to $M_\beta$. For example, the proof of \cite[Lemma 4.24]{sahin} works under our assumption that $F$ is a stable sheaf on $X=\tot (K_S)$ when $-K_S$ is ample. It is well-known that $M_\beta(X)$ is equipped with a symmetric obstruction theory and hence a virtual cycle $ [M_\beta(X)]^{\rm vir}\in A_0(M_\beta(X))$. Toda in \cite{Toda} proved that $n_\beta=\deg [M_\beta(X)]^{\rm vir}$ whenever the GW/PT correspondence holds, which is the case for local del Pezzo surfaces.
Since $M_\beta$ is smooth of dimension $\beta^2+1$, we have  $\deg [M_\beta(X)]^{\rm vir}= (-1)^{\beta^2+1} e(M_\beta)$. 
\end{proof}

\begin{remark}
In \S~\ref{sec:refined}, we discuss an $sl_2\times sl_2$-action on the cohomology of $M_\beta$ which will allow us to refine the $n^g_\beta$.
\end{remark}

\subsection{Blowup property of the BPS invariant} \label{sec:blowup}
Let $\pi :S_{r+1}\to S_r$ be the blowup at a general point $p\in S_r$. Let $\beta$ be a divisor on $S_{r}$. In this section, we show that $M_\beta(S_r)$ and $M_{\pi^*\beta}(S_{r+1})$ are isomorphic. Consequently, $n_{\pi^*\beta}(S_{r+1})=n_\beta(S_r)$. 

Let $F \in M_\beta(S_r)$. We let $\mathrm{supp}(F)\in|\beta|$ be the support scheme defined by
the Fitting ideal.  Since $F$ has pure dimension~1 we have a
presentation of $F$ as
\begin{equation}
0\to E_1\stackrel{\phi}{\to} E_2\to F\to 0,
\label{eq:fitting}
\end{equation}
where $E_1$ and $E_2$ are locally free of the same rank.  Then
$\mathrm{supp}(F)\in|\beta|$ is the subscheme of $S$ defined by the vanishing
of $\det\phi$, and is well known to be independent of the choice of resolution.  This
defines the Chow morphism
\[
M_\beta\to |\beta|.
\]

\begin{lemma} \label{lem:pistarpure} For a pure one-dimensional sheaf $F$ on $S_r$, $\pi^*F$ is pure and
  $$\mathrm{supp}(\pi^*F)=\pi^*\mathrm{supp}(F).$$
\end{lemma}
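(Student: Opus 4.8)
The plan is to pull back the locally free resolution \eqref{eq:fitting} and extract both assertions from the behaviour of $\phi$ and of its determinant. Recall that this resolution $0\to E_1\xrightarrow{\phi}E_2\to F\to 0$, with $E_1,E_2$ locally free of the same rank, exists precisely because a pure one-dimensional sheaf on the smooth surface $S_r$ has projective dimension one. The point to keep in mind throughout is that $\pi$ is \emph{not} flat, since the fibre dimension jumps over $p$; hence I cannot simply invoke exactness of $\pi^*$, and the role of $\det\phi$ is to circumvent this.

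First I would check that $\pi^*$ preserves short exactness, i.e.\ that
\[
0\to \pi^*E_1\xrightarrow{\pi^*\phi}\pi^*E_2\to \pi^*F\to 0
\]
is exact. Right-exactness is automatic, so only the injectivity of $\pi^*\phi$ is at issue. Here I would use $\det(\pi^*\phi)=\pi^*(\det\phi)$: since $\det\phi$ is a nonzero section, with zero locus the one-dimensional scheme $\supp F$, and $\pi$ is dominant, $\pi^*(\det\phi)$ is again nonzero. A morphism of vector bundles of equal rank on the integral surface $S_{r+1}$ whose determinant is nonzero is injective at the generic point, hence injective as a map of torsion-free sheaves; so $\pi^*\phi$ is injective and the sequence is exact.

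For purity I would run an Auslander--Buchsbaum argument on this length-one resolution. At a point $x\in S_{r+1}$ with $(\pi^*F)_x\ne 0$, the local ring is regular of dimension two and $\mathrm{pd}\,(\pi^*F)_x\le 1$, so $\mathrm{depth}\,(\pi^*F)_x=2-\mathrm{pd}\,(\pi^*F)_x\ge 1$. Positive depth at every point of the support forbids zero-dimensional associated primes, and the support is one-dimensional, being the zero locus of the nonzero section $\pi^*(\det\phi)$; hence $\pi^*F$ is pure of dimension one. The support statement is then immediate from the same determinant identity: $\supp(\pi^*F)$ is cut out by $\det(\pi^*\phi)=\pi^*(\det\phi)$, so as an effective Cartier divisor it equals the pullback of the divisor $\{\det\phi=0\}=\supp F$, i.e.\ $\supp(\pi^*F)=\pi^*\supp F$.

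The step I expect to require the most care is exactly the preservation of left-exactness under the non-flat map $\pi^*$; everything hinges on $\det\phi$ remaining nonzero and on the resolution having equal-rank bundles, which together turn a potential $\mathrm{Tor}$ obstruction into the harmless statement that $\pi^*\phi$ stays injective. I note finally that this argument uses nothing about $p$ being a general point and applies verbatim to the blow-up of any smooth point of a smooth surface.
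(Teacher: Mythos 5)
Your proposal is correct and follows essentially the same route as the paper: pull back the equal-rank locally free resolution, establish left-exactness by noting the kernel of $\pi^*\phi$ is a torsion subsheaf of the locally free $\pi^*E_1$ (the paper sees generic injectivity by identifying $\pi^*\phi$ with $\phi$ away from the exceptional curve, you see it via $\det(\pi^*\phi)=\pi^*(\det\phi)\neq 0$ — the same point), then deduce purity from Auslander--Buchsbaum and read off the support from the determinant. Your write-up merely spells out the depth computation that the paper leaves implicit.
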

\begin{proof}
Choose a presentation (\ref{eq:fitting}) of $F$.
Applying $\pi^*$ we
get
\begin{equation}
0\to \pi^*E_1\stackrel{\pi^*(\phi)}{\longrightarrow} \pi^*E_2\to \pi^*F\to 0.
\label{eq:pifitting}
\end{equation}
The sequence \eqref{eq:pifitting} is exact on the left because  the kernel of $\pi^*(\phi)$ is zero away from the exceptional curve as $\pi^*(\phi)$ can be identified with $\phi$, which implies that the kernel is zero everywhere since $\pi^*E_1$ is locally free.

Then $\pi^*F$ is pure by the Auslander-Buchsbaum formula, and
$\mathrm{supp}(\pi^*F)$ is the subscheme of ${S}_{r+1}$ defined by the
vanishing of $\det\pi^*(\phi)=\pi^*\det\phi$, which is equal to the scheme-theoretic
inverse image $\pi^*(\mathrm{supp}(F))$. 
\end{proof}
Thus, we can think of $F$ and $\pi^*F$ having ``the same'' support,
after identifying $|\beta|$ with $|\pi^*\beta|$ via $C\mapsto\pi^*C$.

\begin{lemma}
 For a pure one-dimensional sheaf $F$ on $S_r$,  $\pi_*\pi^*F\simeq F$, $R^1\pi_*\pi^*F=0$, and $\chi(\pi^*F)=\chi(F)$.
\label{lem:pistar}
\end{lemma}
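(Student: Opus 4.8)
The plan is to deduce all three assertions from the locally free presentation \eqref{eq:fitting}, together with the classical computation $\pi_*\O_{S_{r+1}}\simeq\O_{S_r}$ and $R^q\pi_*\O_{S_{r+1}}=0$ for $q\ge 1$, valid for the blowup of a smooth surface at a point (the fibers of $\pi$ being at most one-dimensional). First I would record the consequence for locally free sheaves: for any locally free $E$ on $S_r$ the projection formula gives $R^q\pi_*\pi^*E\simeq E\otimes R^q\pi_*\O_{S_{r+1}}$, so that $\pi_*\pi^*E\simeq E$ and $R^q\pi_*\pi^*E=0$ for all $q\ge 1$. This is the only input from the geometry of the blowup; the rest is homological bookkeeping.

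Next I would apply $\pi_*$ to the pulled-back short exact sequence \eqref{eq:pifitting}, which is exact by Lemma~\ref{lem:pistarpure}. Its long exact sequence of higher direct images reads
\[
0\to\pi_*\pi^*E_1\to\pi_*\pi^*E_2\to\pi_*\pi^*F\to R^1\pi_*\pi^*E_1\to R^1\pi_*\pi^*E_2\to R^1\pi_*\pi^*F\to R^2\pi_*\pi^*E_1\to\cdots,
\]
and by the previous step every term involving the locally free $E_i$ in positive degree vanishes, while $\pi_*\pi^*E_i\simeq E_i$. Hence the sequence degenerates to $0\to E_1\to E_2\to\pi_*\pi^*F\to 0$, and $R^1\pi_*\pi^*F$ is squeezed between two zeros, giving $R^1\pi_*\pi^*F=0$. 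By naturality of the adjunction unit $\id\to\pi_*\pi^*$, which is an isomorphism on the locally free $E_i$, the map $E_1\to E_2$ above is identified with the original $\phi$; comparing with \eqref{eq:fitting} then yields $\pi_*\pi^*F\simeq\coker\phi\simeq F$. This settles the first two assertions simultaneously.

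Finally, the equality $\chi(\pi^*F)=\chi(F)$ follows formally from the Leray spectral sequence $H^p(S_r,R^q\pi_*\pi^*F)\Rightarrow H^{p+q}(S_{r+1},\pi^*F)$: taking Euler characteristics gives
\[
\chi(\pi^*F)=\chi(\pi_*\pi^*F)-\chi(R^1\pi_*\pi^*F)=\chi(F),
\]
using the identification and the vanishing just proved (and $R^q\pi_*\pi^*F=0$ for $q\ge 2$, again by dimension of the fibers).

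The only genuinely non-formal ingredient is the identification of the pushed-forward map $\pi_*\pi^*\phi$ with $\phi$, and I expect this to be the main point to get right; it is handled by the functoriality of the unit of adjunction rather than by any explicit computation. Everything else — the exactness of \eqref{eq:pifitting}, the projection formula, and the Leray argument — is routine once $R^\bullet\pi_*\O_{S_{r+1}}$ is known.
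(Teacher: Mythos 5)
Your argument is correct and follows essentially the same route as the paper: apply $\pi_*$ to the pulled-back presentation \eqref{eq:pifitting}, use the projection formula together with $R^j\pi_*\cO_{S_{r+1}}=0$ for $j>0$ to kill the higher direct images of the locally free terms, compare the resulting presentation with \eqref{eq:fitting}, and conclude via Leray. Your explicit appeal to the naturality of the adjunction unit to identify the pushed-forward map with $\phi$ is a point the paper leaves implicit, but it is the same proof.
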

\begin{proof}
We apply $\pi_*$ to (\ref{eq:pifitting}), noting by the projection formula
that $\pi_*\pi^*E_i\simeq E_i$ and $R^j\pi_*\pi^*E_i=0$ for $i=1,2$ and $j>0$, 
since each $E_i$
is locally free and $R^j\pi_*\cO_{{S}_{r+1}}=0$.  We obtain
\begin{equation}
0\to E_1\stackrel{\phi}{\to}E_2\to \pi_*\pi^*F\to0 
\label{eq:pushpull}
\end{equation}
and $R^1\pi_*\pi^*F=0$.  Comparing (\ref{eq:pushpull}) and (\ref{eq:fitting}) we see
that $\pi_*\pi^*F\simeq F$.  Finally, by Leray
\[
\chi(\pi^*F)=\chi(\pi_*\pi^*F)-\chi(R^1\pi_*\pi^*F)=\chi(F).
\]
\end{proof}

\begin{proposition}\label{prop:blpback}
 For a pure one-dimensional sheaf $F$ on $S_r$ with $\chi(F)=1$, $F$ is stable if and only if $\pi^*F$ is stable.
\label{prop:stable}
\end{proposition}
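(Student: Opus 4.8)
The plan is to reduce stability to a numerical condition on Euler characteristics and then transport destabilizing subsheaves across $\pi^*$ and $\pi_*$ using Lemmas~\ref{lem:pistarpure} and~\ref{lem:pistar}. First I would record the elementary reformulation noted before Proposition~\ref{prop:bps equiv}: since $\chi(F)=1$, a pure one-dimensional sheaf $F$ is \emph{stable} if and only if every proper nonzero subsheaf $G\subset F$ has $\chi(G)\le 0$; equivalently, $F$ is \emph{not} stable exactly when it admits a proper nonzero subsheaf $G$ with $\chi(G)\ge 1$. Because $\chi(\pi^*F)=\chi(F)=1$ by Lemma~\ref{lem:pistar} and $\pi^*F$ is pure by Lemma~\ref{lem:pistarpure}, the same criterion applies verbatim to $\pi^*F$. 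It therefore suffices to match up the ``destabilizing'' subsheaves (those with $\chi\ge 1$) on the two surfaces, and I will prove the two implications separately, each in contrapositive form.

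For ``$\pi^*F$ stable $\Rightarrow F$ stable'' I pull back a destabilizer. Suppose $F$ is not stable and pick $G\subsetneq F$ with $\chi(G)\ge 1$. After replacing $G$ by its saturation (which only increases $\chi(G)$ and keeps it a proper nonzero subsheaf), the quotient $F/G$ is pure of dimension one. Applying the right-exact functor $\pi^*$ to $0\to G\to F\to F/G\to 0$ and setting $H:=\ker\big(\pi^*F\twoheadrightarrow \pi^*(F/G)\big)$ gives a short exact sequence $0\to H\to \pi^*F\to \pi^*(F/G)\to 0$. Since $F/G$ is pure one-dimensional, Lemma~\ref{lem:pistar} yields $\chi(\pi^*(F/G))=\chi(F/G)=1-\chi(G)$, whence $\chi(H)=\chi(\pi^*F)-\chi(\pi^*(F/G))=\chi(G)\ge 1$. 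As $\pi^*(F/G)\ne 0$ and $\chi(H)\ge 1$, the subsheaf $H$ is proper and nonzero, so $\pi^*F$ is not stable.

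For the converse ``$F$ stable $\Rightarrow \pi^*F$ stable'' I push a destabilizer forward, and this is where the one genuine difficulty appears. Suppose $\pi^*F$ is not stable, with $H\subsetneq \pi^*F$, $H\ne 0$, $\chi(H)\ge 1$, and set $G:=\pi_*H\subseteq \pi_*\pi^*F=F$ by Lemma~\ref{lem:pistar}. Since the fibers of $\pi$ have dimension $\le 1$ and are positive-dimensional only over the blown-up point, $R^1\pi_*H$ is supported at that point, hence zero-dimensional with $\chi(R^1\pi_*H)\ge 0$; Leray then gives $\chi(G)=\chi(H)+\chi(R^1\pi_*H)\ge \chi(H)\ge 1$, so in particular $G\ne 0$. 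The crux is to rule out $G=F$, so that $G$ is a \emph{proper} subsheaf and actually destabilizes $F$. Here I would use the adjunction $\pi^*\dashv \pi_*$: writing $j\colon H\hookrightarrow \pi^*F$ and using that the unit $\eta_F\colon F\xrightarrow{\ \sim\ }\pi_*\pi^*F$ is an isomorphism (Lemma~\ref{lem:pistar}), naturality of the counit gives $j\circ\epsilon_H=\epsilon_{\pi^*F}\circ\pi^*(\pi_*j)$. If $G=\pi_*H=F$ then $\pi_*j$ is an isomorphism, and since the triangle identity forces $\epsilon_{\pi^*F}$ to be an isomorphism as well, the right-hand side is an isomorphism; hence $j$ is surjective, forcing $H=\pi^*F$ and contradicting the properness of $H$. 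Thus $G$ is a proper nonzero subsheaf of $F$ with $\chi(G)\ge 1$, and $F$ is not stable.

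I expect the adjunction step excluding $G=\pi_*H=F$ to be the only real obstacle: a bare count of Euler characteristics cannot prevent $\pi_*H$ from exhausting $F$, and a hands-on argument would instead have to control subsheaves and quotients of $\pi^*F$ supported on the exceptional curve $E$ (where $\pi^*F|_E$ is a trivial bundle, so any quotient of $\pi^*F$ supported on $E$ is globally generated and of nonnegative Euler characteristic). The counit/triangle-identity argument sidesteps this case analysis cleanly, and everything else is formal bookkeeping with $\chi$ enabled by Lemmas~\ref{lem:pistarpure} and~\ref{lem:pistar}.
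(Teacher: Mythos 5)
Your proof is correct and follows the same overall strategy as the paper's: reformulate stability (for $\chi=1$) as the condition that every proper nonzero subsheaf has nonpositive Euler characteristic, pull a destabilizer back along $\pi$ for one implication, and push one forward using the Leray inequality $\chi(\pi_*H)\ge\chi(H)$ for the other. The one substantive difference is at the end of the converse direction: the paper concludes directly that ``$\pi_*G$ destabilizes $F$'' without checking that $\pi_*G$ is a \emph{proper} subsheaf of $F$, whereas you rule out $\pi_*H=F$ explicitly via the counit of $\pi^*\dashv\pi_*$ together with the triangle identity and the isomorphism $F\simeq\pi_*\pi^*F$ of Lemma~\ref{lem:pistar}. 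This is a genuine (if small) addition: a bare Euler-characteristic count in the putative case $\pi_*H=F$ only yields $\chi(H)=1$ and gives no contradiction, so some argument of this kind is needed, and yours is valid. The only points you elide are routine: that the saturation of a destabilizing $G\subsetneq F$ remains proper (if $F/G$ were $0$-dimensional, $\chi(G)\ge1$ together with $\chi(F)=1$ would force $F/G=0$), and that the identity $\chi(\pi^*(F/G))=\chi(F/G)$ requires $F/G$ to be pure, which saturation guarantees.
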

\begin{proof}
Suppose $\pi^*F$ is stable. Let $G$ be a saturated subsheaf of $F$. Since $F/G$ is pure, by \eqref{eq:pifitting}, $L_1\pi^*(F/G)=0$ and hence $\pi^*G$ is a subsheaf of $\pi^*F$. By Lemma~\ref{lem:pistar}, if $G$ destabilize $F$, then $\pi^*G$ destabilize $\pi^*F$. Hence $F$ is also stable. 

Conversely, suppose that $F$ is stable and $G\subset\pi^*F$ is a subsheaf with $\chi(G)\ge 1$.
Applying $\pi_*$ we get that $\pi_*G$ is a subsheaf of  $\pi_*\pi^*F
\simeq F$.  Since $R^1\pi_*G$ is supported at $p$, we have 
$\chi(R^1\pi_*G)=h^0(R^1\pi_*G)\ge0$ and
\[
\chi(\pi_*G)=\chi(G)+\chi(R^1\pi_*G)\ge\chi(G)\ge1
\]
by Leray.  Hence $\pi_*G$ destabilizes $F$, contradicting the stability of $F$.
\end{proof}

\begin{lemma}
  Let $F\in M_{\pi^*\beta}(S_{r+1})$.  Then $\pi_*F$ is pure with $c_1(\pi_*F)=\beta$.
  \label{lem:pushpure}
\end{lemma}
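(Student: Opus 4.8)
The plan is to establish the two assertions by separate arguments: the identity $c_1(\pi_*F)=\beta$ by a generic-rank computation, and purity by converting a hypothetical torsion section into a subsheaf of $F$ that violates stability.

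For the Chern class, I would use that $\pi$ restricts to an isomorphism $S_{r+1}\setminus e\xrightarrow{\sim}S_r\setminus\{p\}$, so that $\pi_*F$ agrees with $F$ over this open set. Since the first Chern class of a one-dimensional sheaf is its support cycle weighted by the lengths at the generic points of curves, and since the generic point of any prime divisor $D\subset S_r$ is different from $p$, the multiplicity of $\pi_*F$ along $D$ equals the multiplicity of $F$ along the strict transform $\widetilde D$. Hence $c_1(\pi_*F)=\pi_*\big(c_1(F)\big)$ as cycles, the exceptional curve contributing nothing because $\pi_*e=0$. As $c_1(F)=\pi^*\beta$ and $\pi_*\pi^*=\mathrm{id}$ for the birational morphism $\pi$ (compatibly with $\pi_*\pi^*\mathcal F\simeq\mathcal F$ of Lemma~\ref{lem:pistar}), this gives $c_1(\pi_*F)=\beta$.

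For purity I would argue by contradiction. Suppose $\pi_*F$ carries a nonzero zero-dimensional subsheaf $T$. Over $S_r\setminus\{p\}$ the sheaf $\pi_*F$ is isomorphic to the pure sheaf $F$, so $T$ is supported at $p$; being a nonzero finite-length module there, $T$ contains a copy of $\mathcal O_p$, yielding a nonzero map $\mathcal O_p\to\pi_*F$. The key step is the adjunction isomorphism $\Hom_{S_r}(\mathcal O_p,\pi_*F)\cong\Hom_{S_{r+1}}(\pi^*\mathcal O_p,F)$ together with the computation $\pi^*\mathcal O_p=\mathcal O_e$, valid because $\mathfrak m_p\,\mathcal O_{S_{r+1}}=\mathcal O(-e)$ and hence $\pi^*(\mathcal O_{S_r}/\mathfrak m_p)=\mathcal O_{S_{r+1}}/\mathcal O(-e)=\mathcal O_e$. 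This produces a nonzero morphism $\mathcal O_e\to F$, whose image is a quotient of $\mathcal O_e\cong\mathcal O_{\pone}$ and is therefore either $\mathcal O_e$ itself or zero-dimensional; the latter is excluded by purity of $F$, so $\mathcal O_e$ embeds into $F$. Since $\chi(\mathcal O_e)=1$ and $\mathcal O_e$ is a proper subsheaf (as $\beta\neq0$, $F$ is not supported on $e$), the criterion that a nonzero proper subsheaf $G$ with $\chi(G)\ge1$ destabilizes $F$ contradicts the stability of $F$. Hence $T=0$ and $\pi_*F$ is pure.

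The hard part is the purity statement, and specifically the observation that zero-dimensional torsion of $\pi_*F$ is detected by a global section of $F$ supported on the exceptional curve. The clean route is the adjunction combined with the identity $\pi^*\mathcal O_p=\mathcal O_e$, after which the paper's own $\chi\ge1$ destabilization criterion finishes the argument immediately; the Chern-class identity is then routine.
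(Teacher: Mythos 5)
Your proof is correct and follows essentially the same route as the paper: the class computation via the identification of $\pi_*F$ with $F$ away from the exceptional locus, and purity by turning a skyscraper $\CC_p\subset\pi_*F$ into an embedding $\cO_e\hookrightarrow F$ (the paper does this via a global section annihilated by $\fom_p$, you via the adjunction $\Hom(\cO_p,\pi_*F)\cong\Hom(\pi^*\cO_p,F)$ with $\pi^*\cO_p=\cO_e$ — the same mechanism) and then contradicting stability with $\chi(\cO_e)=1$. No gaps.
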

\begin{proof}
Clearly $\pi_*F$ can only have torsion at $p$.  Suppose we had  a skyscraper
sheaf $\CC_p\subset \pi_*F$ at $p$.  This gives a global section $s$ of $\pi_*F$ annihilated
by $m_p$.  Then $s$ corresponds to a global section $\tilde{s}$ of $F$, which is then
necessarily annihilated by
$\pi^{-1}(m_p)\cO_{S_{r+1}}=\shI_E$.  So $\tilde{s}$ induces a map $\cO_{ S_{r+1}}/\shI_E\simeq
\cO_E\rightarrow F$ which is injective because $\cO_E$ is pure.  Since $\chi(\cO_E)=1$, this would violate stability of $F$.

The class of $\pi_*F$ is $\beta$ since  $\pi_*F|_{S_r-p}$ is identified with $F|_{{S}_{r+1}-E}$
via $\pi$ and the restriction map 
\[ \mathrm{Pic}(S_{r+1})\to \mathrm{Pic}(S_{r+1} -E) \simeq \mathrm{Pic}(S_r-p)\simeq \mathrm{Pic}(S_r)\] 
is the left inverse of $\pi^*:\mathrm{Pic}(S_r)\to  \mathrm{Pic}(S_{r+1})$.
\end{proof}

Now we consider the natural map $\rho:\pi^*\pi_* F\to F$.  

\begin{proposition}  
\label{prop:pushforward}
Let $F\in M_{\pi^*\beta}(S_{r+1})$. Then $\rho:\pi^*\pi_* F\to F$ is an isomorphism. Consequently, $\pi_*F$ is stable and $\chi(\pi_*F)=1$.  
\end{proposition}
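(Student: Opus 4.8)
Write $G := \pi_*F$. By Lemma~\ref{lem:pushpure}, $G$ is pure of dimension one with $c_1(G)=\beta$, and by Lemma~\ref{lem:pistarpure} its pullback $\pi^*G=\pi^*\pi_*F$ is again pure. The plan is to analyse the kernel $K:=\ker\rho$ and the cokernel $Q:=\coker\rho$. Both are supported on the exceptional curve $E$, because $\pi$ is an isomorphism over $S_r\minus\{p\}$ and the counit $\rho$ is an isomorphism there. Since $K$ is a subsheaf of the pure sheaf $\pi^*G$ it has no zero-dimensional subsheaf, so $K$ is either $0$ or pure of dimension one with support $E$.

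First I would compute $\chi(K)$. Factoring $\rho$ as $\pi^*G\twoheadrightarrow\im\rho\hookrightarrow F$ and applying $\pi_*$, the adjunction triangle identity together with Lemma~\ref{lem:pistar} (which says that the unit $G\to\pi_*\pi^*G$ is an isomorphism and $R^1\pi_*\pi^*G=0$) shows that $\pi_*\rho$ is an isomorphism. Since $\pi_*(\im\rho\hookrightarrow F)$ is injective by left exactness, a short diagram chase forces $\pi_*(\pi^*G\twoheadrightarrow\im\rho)\colon G\to\pi_*\im\rho$ to be an isomorphism as well, whence $\pi_*K=R^1\pi_*K=0$; by Leray this gives $\chi(K)=0$.

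Next I would prove that $\rho$ is surjective, i.e.\ $Q=0$, and this is where stability of $F$ enters and is the crux of the argument. If $Q\ne0$, then $\im\rho$ is a proper nonzero subsheaf of $F$, so stability of $F$ (recall that for $\chi(F)=1$ a proper nonzero subsheaf is destabilizing exactly when its holomorphic Euler characteristic is $\ge1$) forces $\chi(\im\rho)\le0$. But $\chi(\im\rho)=\chi(\pi^*G)-\chi(K)=\chi(G)$ by the previous step and Lemma~\ref{lem:pistar}, and $\chi(G)=\chi(\pi_*F)=\chi(F)+\chi(R^1\pi_*F)\ge1$ since $R^1\pi_*F$ is a zero-dimensional sheaf supported over $p$. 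This contradiction shows $\rho$ is surjective.

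Finally, once $\rho$ is surjective the injectivity is immediate by a Chern-class count: from $0\to K\to\pi^*G\to F\to0$ and the additivity of $c_1$ on the pullback of a locally free resolution of $G$ as in \eqref{eq:fitting} (which stays exact by \eqref{eq:pifitting}), one gets $c_1(K)=c_1(\pi^*G)-c_1(F)=\pi^*\beta-\pi^*\beta=0$; but a nonzero pure one-dimensional sheaf has nonzero $c_1$, so $K=0$ and $\rho$ is an isomorphism. The stated consequences then follow formally: $\chi(\pi_*F)=\chi(\pi^*G)=\chi(F)=1$ by Lemma~\ref{lem:pistar}, and since $\pi_*F$ is pure with $\chi=1$ and $\pi^*\pi_*F\cong F$ is stable, Proposition~\ref{prop:blpback} gives that $\pi_*F$ is stable. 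I expect the surjectivity step to be the main obstacle, since it is the only place where the stability hypothesis on $F$ is genuinely used, with the vanishing $\chi(K)=0$ serving as the technical input that makes the Euler-characteristic contradiction work.
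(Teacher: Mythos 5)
Your proof is correct and follows essentially the same route as the paper's: analyze $\ker\rho$ and $\coker\rho$, use Leray and the adjunction to control their Euler characteristics, invoke stability of $F$ (via $\chi(\im\rho)=\chi(\pi_*F)\ge 1$) to force $\rho$ surjective, and then kill the kernel by a $c_1$ computation. The only cosmetic differences are that you obtain $\chi(\ker\rho)=0$ directly from the triangle identity, where the paper instead deduces $\chi(\ker\rho)\le 0$ from $\pi_*(\ker\rho)=0$ and the purity of $\pi_*F$, and that you rule out a nonzero kernel using purity and $c_1=0$ rather than zero-dimensionality together with $\pi_*(\ker\rho)=0$.
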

\begin{proof}  
Let $G$ be the kernel of $\rho$.  Then $\pi_*G$ is a subsheaf of
$\pi_*\pi^*\pi_*F$.   But $\pi_*\pi^*\pi_*F\simeq\pi_*F$ by the first statement
of Lemma~\ref{lem:pistar} applied to $\pi_*F$. 
But $\pi_*G$ is supported at $p$, contradicting the purity of $\pi_*F$ unless 
$\pi_*G=0$.

By Lemma~\ref{lem:pistarpure} and Lemma~\ref{lem:pushpure}
we see that $c_1(\pi^*\pi_*F)=\pi^*(\beta)$, so 
letting 
$Q=\mathrm{coker}(\rho)$, it follows that 
$c_1(G)=c_1(Q)$. 

Finally, we compute the Euler characteristics of $\pi_*F$ in two different ways.  Using $\rho$, 
we see that 
$\chi(\pi^*\pi_*F)=1+\chi(G)-\chi(Q)$.  
By Lemma~\ref{lem:pistar}, we see that
$R^1\pi_*(\pi^*\pi_*F)=0$.  So by Leray for $\pi_*$, we compute
$\chi(\pi^*\pi_*F)=\chi(\pi_*F)$.  But again by Leray, we have  $\chi(\pi_*F)=
\chi(F)+h^0(R^1\pi_*F)=1+h^0(R^1\pi_*F)$.
From $\pi_*G=0$ it follows that $\chi(G)\leq 0$, 
hence $\chi(Q)\leq 0$. 
We must have $Q=0$ since $F$ is stable, 
and $G$ is $0$-dimensional by $c_1(G)=c_1(Q)$. 
Again by $\pi_*G=0$ we have $G=0$, 
so $\rho$ is an isomorphism. 
\end{proof}


\begin{proposition}\label{prop:pullback}
Let $\pi :S_{r+1}\to S_r$ be a blowup. Let $\beta$ be a divisor on $S_{r}$. Then, $M_\beta(S_r)$ and $M_{\pi^*\beta}(S_{r+1})$ are isomorphic. 
\end{proposition}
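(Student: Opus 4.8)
The plan is to exhibit mutually inverse morphisms between $M_\beta(S_r)$ and $M_{\pi^*\beta}(S_{r+1})$ using the two functors $\pi^*$ and $\pi_*$, which the preceding lemmas and propositions have already shown to be well defined and compatible with stability. First I would define the map $\Phi: M_\beta(S_r) \to M_{\pi^*\beta}(S_{r+1})$ by $F \mapsto \pi^*F$. This is well defined on objects: by Lemma~\ref{lem:pistarpure} the sheaf $\pi^*F$ is pure of dimension one, by Lemma~\ref{lem:pistar} we have $\chi(\pi^*F)=\chi(F)=1$ and $c_1(\pi^*F)=\pi^*\beta$, and by Proposition~\ref{prop:stable} stability of $F$ forces stability of $\pi^*F$. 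In the opposite direction I would define $\Psi: M_{\pi^*\beta}(S_{r+1}) \to M_\beta(S_r)$ by $F \mapsto \pi_*F$; Lemma~\ref{lem:pushpure} gives $c_1(\pi_*F)=\beta$ and purity, and Proposition~\ref{prop:pushforward} gives that $\pi_*F$ is stable with $\chi(\pi_*F)=1$, so $\Psi$ lands in $M_\beta(S_r)$.

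Next I would check that $\Phi$ and $\Psi$ are mutually inverse as maps of sets. The composition $\Psi\circ\Phi$ sends $F$ to $\pi_*\pi^*F$, which is isomorphic to $F$ by the first statement of Lemma~\ref{lem:pistar}. The composition $\Phi\circ\Psi$ sends $F$ to $\pi^*\pi_*F$, and Proposition~\ref{prop:pushforward} asserts precisely that the natural map $\rho:\pi^*\pi_*F\to F$ is an isomorphism, so $\Phi\circ\Psi\simeq\id$ as well. Thus $\Phi$ and $\Psi$ induce a bijection on the underlying sets of points.

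To upgrade this set-theoretic bijection to an isomorphism of moduli spaces (i.e.\ of schemes), I would promote the constructions to the level of flat families over an arbitrary base $T$. Given a $T$-flat family $\shF$ on $S_r\times T$ with fibers in $M_\beta(S_r)$, one forms $(\pi\times\id_T)^*\shF$; the key point is that the formation of $\pi^*$ commutes with base change and that the exactness in \eqref{eq:pifitting} together with purity is preserved fiberwise, so this yields a $T$-flat family on $S_{r+1}\times T$ of stable sheaves, hence a morphism of functors and thus of the fine moduli schemes. The inverse direction uses $(\pi\times\id_T)_*$ together with the vanishing $R^1\pi_*\pi^*(-)=0$ from Lemma~\ref{lem:pistar} to guarantee that pushforward also commutes with base change and preserves flatness. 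The natural isomorphisms $\pi_*\pi^*\simeq\id$ and $\rho$ then globalize to isomorphisms of families, giving inverse morphisms of schemes.

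The main obstacle will be this last step: verifying that the adjunction isomorphisms are compatible with base change and therefore descend to scheme morphisms rather than merely bijections on closed points. The technical heart is the base-change compatibility of $R^1\pi_*\pi^*(-)=0$ in families, which ensures $\chi$ is locally constant and the pushforward family is flat; once flatness and the fiberwise identifications are in hand, the universal properties of the fine moduli spaces $M_\beta(S_r)$ and $M_{\pi^*\beta}(S_{r+1})$ force the induced morphisms to be mutually inverse. In practice much of this is routine given the lemmas above, so I would expect the proof in the paper to be short, essentially assembling Lemmas~\ref{lem:pistarpure}--\ref{lem:pushpure} and Propositions~\ref{prop:stable}--\ref{prop:pushforward} into the statement that $\pi^*$ and $\pi_*$ are inverse isomorphisms.
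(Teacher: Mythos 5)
Your argument is correct in substance but diverges from the paper at the step you yourself flag as the main obstacle. The paper also begins by pulling back the universal family $\shF$ along $\id\times\pi$ and invoking Proposition~\ref{prop:blpback} to see that the fibers land in $M_{\pi^*\beta}(S_{r+1})$, so the morphism $\pi^*:M_\beta(S_r)\to M_{\pi^*\beta}(S_{r+1})$ is constructed exactly as in your first paragraph. But it never constructs the inverse morphism via $(\pi\times\id_T)_*$: instead it observes that $\pi^*$ is \emph{bijective} on points by Proposition~\ref{prop:pushforward}, is \emph{birational} because $\pi$ is an isomorphism away from the exceptional divisor, and that both moduli spaces are \emph{smooth} by Proposition~\ref{prop:bps equiv}; Zariski's main theorem then upgrades the bijective birational morphism between normal varieties to an isomorphism. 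This completely sidesteps the cohomology-and-base-change analysis of $R^1\pi_*$ in families that your route requires in order to show that the pushforward of a $T$-flat family is again $T$-flat and commutes with base change. Your approach is more general (it would work without smoothness of the moduli spaces and would identify the moduli functors, not just the coarse/fine spaces), but it is technically heavier; the paper's approach buys a two-line conclusion at the price of using the smoothness established earlier. If you pursue your route, the family-flatness of $(\pi\times\id_T)^*\shF$ also deserves a word: one should pull back a two-term locally free resolution of $\shF$ (which exists since the fibers are pure of dimension one on a surface) and argue as in Lemma~\ref{lem:pistarpure} that exactness on the left is preserved, rather than appealing only to fiberwise purity.
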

\begin{proof}
Let $\shF$ be a universal family on $M_\beta(S_r)\times S_r$. The pullback $\shF' = (id\times \pi)^* \shF$ is a family on $M_\beta(S_r)\times S_{r+1}$, whose fibers are stable sheaves in $M_{\pi^*\beta}(S_{r+1})$ by Proposition \ref{prop:blpback}. So $\shF'$ induces the morphism $\pi^*:M_\beta(S_r)\to M_{\pi^*\beta}(S_{r+1})$. By Proposition \ref{prop:pushforward}, $\pi^*$ is bijective and since $\pi$ is an isomorphism away from the exceptional divisor, $\pi^*$ is a birational morphism. By Proposition \ref{prop:bps equiv}, the two moduli spaces $M_\beta(S_r)$ and $M_{\pi^*\beta}(S_{r+1})$ are smooth. Therefore by Zariski's main theorem, $\pi^*$ is an isomorphism. 
\end{proof}


In \S~\ref{sec:localcal}, we will consider curve classes $\beta$ of arithmetic genus at most 2. By Lemma~\ref{lem:Rocco} and Proposition~\ref{prop:pullback}, with a few exceptions it is enough to calculate BPS numbers for very ample classes $\beta$ by blowing down all $(-1)$-curves $l$ such that $\beta.l=0$. 

\begin{remark}
The isomorphism constructed above commutes with the Chow morphisms. Therefore the higher genus BPS invariants as well as their $sl_2\times sl_2$ refinements as defined in \cite{KL,Joyce,MT} remain unchanged as well. 
\end{remark}

\subsection{$\delta$-stable pairs and wall-crossing}\label{sec:pairs}

Suppose that the BPS invariants $n^g_\beta(X)=n^g_\beta$ satisfy the Gopakumar-Vafa formula
\begin{equation}\label{eq:gwbps}
\sum_{\beta, g} \mc{I}_{\beta}^g(X) q^\beta \lambda^{2g-2}
=\sum_{\beta, g, k}n^g_\beta \frac{1}{k}\left(2
\sin\left(\frac{k\lambda}{2}\right)^{2g-2} q^{k\beta} \right),
\end{equation}
where $\mc{I}_{\beta}^g(X)$ are the (local) Gromov-Witten invariants.
Using the conjectured GW-PT correspondence,\footnote{The GW-PT correspondence is proven when $S$ (and hence $X$) is toric by combining the toric GW-DT correspondence \cite{MOOP} with the DT-PT correspondence \cite{Bridgeland}. The GW-PT correspondence for a general del Pezzo surface $S$ reduces to the toric cases by taking a toric blowup of $\ptwo$ and then using deformation invariance of the GW and PT invariants.} we would then have the following PT-BPS formula \cite{katz}

\begin{equation}\label{productGV}
\begin{split}
Z_{PT}=\prod_\beta &\left(  \prod_{j=1}^\infty   \left(1+(-1)^{j+1}q^j Q^\beta\right)^{jn^0_\beta}\right. \\
& \left. \cdot\prod_{g=1}^\infty\prod_{k=0}^{2g-2}\left(1+\left(-1\right)^{g-k}q^{g-1-k}Q^\beta \right)^{\left(-1\right)^{k+g}n^g_\beta {2g-2 \choose k}}\right), 
\end{split}
\end{equation}
where $Z_{PT}$ is the generating function for the PT invariants. See Definition~\ref{def:pt} for PT-stable pairs.

In \cite{kkv}, Katz, Klemm and Vafa developed a geometric computational technique for BPS invariants. Later in \cite{ckk}, the refined BPS invariants are defined from the refined PT invariants and the method is extended to compute the refined BPS indices.

As a consequence of the product formula \eqref{productGV}, it was suggested in \cite{kkv} that the genus zero BPS invariant $n_\beta:=n_\beta^0$ can be computed by
\begin{equation}\label{eq:kkv} n_\beta= PT_{\beta,1}-PT_{\beta,-1} + \text{correction terms},\end{equation}
where $PT_{\beta,n}$ is the stable pair invariant of degree $\beta$ and Euler characteristic $n$. The correction terms are combinations of lower degree PT invariants. In \cite{ckk}, the correction terms are interpreted as a wall-crossing contribution of the moduli spaces of stable pairs. After wall-crossing, the moduli spaces of stable pairs are related to the moduli space of (Gieseker-)stable sheaves.  We will formulate and prove a refined version of (\ref{eq:kkv}) in Proposition~\ref{prop:eulermd} below.

To compute the local BPS invariants, we will use Proposition~\ref{prop:bps equiv} and compute the topological Euler characteristic of the moduli spaces $M_\beta$. 
More generally, we compute the Poincar\'{e} polynomials.
\begin{definition}
For a complex algebraic variety $M$, we let $E_M(u,v)$ be its E-polynomial.  We define the \emph{virtual Poincar\'{e} polynomial} of $M$ as the polynomial $P_t(M)=E_M(-t^{1/2},-t^{1/2})$ in $t^{1/2}$.  The virtual Poincar\'e polynomial satisfies the properties
\begin{itemize}
\item[(i)] $ P_t(M)= \sum_{i\ge 0} \dim_{\mathbb{Q}} H^i(M, \mathbb{Q})\,t^{i/2}$ if $M$ is nonsingular and projective. 
\item[(ii)] $P_t(M) = P_t(M\setminus Y) +P_t(Y)$ for a closed algebraic subset $Y$ of $M$. 
\end{itemize}
\end{definition} 
In our cases, the odd cohomology groups of $M_\beta$ vanish so that $P_t(M_\beta)$ is actually a polynomial in $t$ and the topological Euler characteristic is given by $\euler(M)=P_1(M)$. Note that although the Poincar\'{e} polynomial is not motivic in general, the virtual Poincar\'{e} polynomial is motivic, and since $M_\beta$ is smooth, the virtual Poincar\'{e} polynomial agrees with the usual Poincar\'{e} polynomial in $t$ with $t^{1/2}$ substituted for $t$. 

We will freely use the following properties of the virtual Poincar\'{e} polynomial, which follow from the definition (See \cite[\S 4.5]{Fulton}).
\begin{itemize}
\item[(iii)] If $M$ is a disjoint union of a finite number of locally closed subvarieties $M_i$, then $P_t(M)=\sum P_t(M_i)$.
\item[(iv)] If $M$ is a Zariski locally trivial fibration over $Y$ with fibers $F$, then $P_t(M)=P_t(F)P_t(Y)$.
\end{itemize}

To compute the Poincar\'{e} polynomial, we relate $M_\beta$ birationally with the moduli spaces of $\delta$-stable pairs by wall-crossing. This approach is taken in \cite{cc} to compute the Betti numbers for $M_\beta$ when $S=\PP^2$ and $\beta=4$ and $5$. See also \cite{ckk}.

\begin{definition}\label{def:pt0}
A \emph{pair} on $X$ is a pair $(s,F)$ of a coherent sheaf $F$ on $X$ of class $\beta$ together with a nonzero section $s\in H^0(F)$. A morphism between pairs is a morphism of sheaves which preserves the sections up to multiplication by a constant. 
\end{definition}

The topological data of $(s,F)$ are defined to be those of the sheaf $F$. The notion of pairs originated in the work of Le Potier \cite{LeP} on \emph{coherent systems}. A coherent system is a pair $(V,F)$ of a coherent sheaf $F$ with a subspace $V\subset H^0(F)$ of fixed dimension. So, our pairs are coherent systems of dimension one. It is often convenient to consider a sheaf as a coherent system of dimension zero. 

Le Potier \cite{LeP} studied a series of stability conditions on coherent systems, which reads as follows for pairs. See also \cite{mhe}.

\begin{definition}
Let $\delta\in \mathbb{Q}_+$. A pair $(s,F)$ is \emph{$\delta$-stable} if 
\begin{itemize}
\item[(a)] $F$ is pure. 
\item[(b)] For any proper nonzero subsheaf $G$ of $F$, we have \[ \frac{\chi(G)+\epsilon(s,G)\delta}{r(G)} <\frac{\chi(F)+\delta}{r(F)},\]
where $r(F)$ is the linear coefficient of the Hilbert polynomial of $F$ and $\epsilon(s,G)=1$ if $s$ factors through $G$ and $\epsilon(s,G)=0$ otherwise. 
\end{itemize}
When the equality is allowed in Condition (2), then the pair is \emph{$\delta$-semistable}. 
\end{definition}

As in \S\ref{sec:bps}, we use the ample line bundle $L=-K_S$ to define the Hilbert polynomial of a sheaf on $S$. So, $r(F)=(-K_S).[F]$. We denote by $M^\delta_{\beta, n}$  the moduli space of $\delta$-stable pairs $(s,F)$ on $S$ with $[F]=\beta$ and $\chi(F)=n$. When there are no strictly semistable $\delta$-stable pairs, $M^\delta_{\beta, n}$ is constructed as a projective scheme by GIT. 

The values of $\delta$ where there exist strictly $\delta$-semistable pairs are called the \emph{walls}. Then the moduli space $M^\delta_{\beta, n}$ changes only at walls. We will see that in our cases there are only finitely many walls.  

One special case is when $\delta$ is sufficiently large, which we denote by $\delta=\infty$. In this case, the $\delta$-stability condition is equivalent to the stability condition on pairs of Pandharipande and Thomas \cite{pt}.

\begin{definition}\label{def:pt}
A pair $(s,F)$ is PT-stable if \begin{itemize}
\item[(a)] $F$ is pure of dimension 1. 
\item[(b)] The cokernel of $s:\shO_X \to F$ is zero-dimensional. 
\end{itemize}
\end{definition}

We denote by $P_n(S,\beta)$ the moduli space of PT-stable pairs on $S$.  In other words, $P_n(S,\beta)=M^\infty_{\beta, n}$. By condition (2) in Definition~\ref{def:pt}, it is straightforward to see that $P_n(S,\beta)$ is empty when $n< 1-p_a(\beta)$.
Pandharipande and Thomas \cite{pt} proved that  $P_n(X,\beta)$ is equipped with a symmetric obstruction theory when $X$ is a Calabi-Yau threefold. In general, when $X=\tot (K_S)$, $P_n(X,\beta)$ may not be equal to $P_n(S,\beta)$. However, we will only consider the wall-crossing of $\delta$-stable pairs defined on $S$.

\begin{proposition}\label{prop:PT} 
Let $S$ be a del Pezzo surface. Let $p_a=p_a(\beta)$ and assume $n\ge 1-p_a$. Recall that $w=(-K_S).\beta$.  
\begin{itemize}
\item[(i)] If $\beta$ is a line class, then $P_n(S,\beta)\simeq \PP^{n-1}$.
\item[(ii)] Assume that $\beta$ is base point free. If $\beta$ is $(n-2+p_a)$-very ample, then $P_n(S,\beta)$ is a projective bundle of rank $w-n$ over the Hilbert scheme $\mathrm{Hilb}^{n-1+p_a}(S)$.
\end{itemize}
\end{proposition}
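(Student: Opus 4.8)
The plan is to analyze PT-stable pairs directly from Definition~\ref{def:pt} and identify them with concrete geometric data. For a PT-stable pair $(s,F)$ with $[F]=\beta$ and $\chi(F)=n$, purity of $F$ forces the support of $F$ to be a curve $C\in|\beta|$, and the condition that $\coker(s:\cO_S\to F)$ be zero-dimensional means $F$ agrees with a twisted structure sheaf of $C$ away from finitely many points. Concretely, I would argue that $s$ exhibits $F$ as an extension of a length-$(n-1+p_a)$ zero-dimensional sheaf $Q=\coker(s)$ by $\cO_C$, so that the pair $(s,F)$ is equivalent to the data of a curve $C\in|\beta|$ together with a zero-dimensional subscheme $Z\subset C$ of length $n-1+p_a$ (the support of $Q$), using $\chi(\cO_C)=1-p_a$. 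This is the standard Pandharipande--Thomas description: PT-stable pairs on a surface supported in class $\beta$ with $\chi=n$ correspond to pairs $(C,Z)$ with $C\in|\beta|$ and $Z$ a length-$(n-1+p_a)$ subscheme of $C$. The forgetful map $(C,Z)\mapsto Z$ then realizes $P_n(S,\beta)$ as a family over $\mathrm{Hilb}^{n-1+p_a}(S)$, and I would show the fiber over $Z$ is the linear system of curves in $|\beta|$ passing through $Z$, i.e.\ $\PP\big(H^0(\cO_S(\beta)\otimes\shI_Z)\big)$.

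For part~(ii), the main point is to prove that this family is a projective bundle, which requires the fiber dimension to be constant over all of $\mathrm{Hilb}^{n-1+p_a}(S)$. Here the $(n-2+p_a)$-very ampleness hypothesis enters: for any length-$(n-1+p_a)$ subscheme $Z$, the restriction map $H^0(\cO_S(\beta))\to H^0(\cO_S(\beta)|_Z)$ is surjective by Definition of $k$-very ampleness with $k=n-2+p_a$, so that $h^0(\cO_S(\beta)\otimes\shI_Z)=h^0(\cO_S(\beta))-(n-1+p_a)$ is independent of $Z$. Using that $\beta$ is base point free, Lemma~\ref{lem:nefbig} gives $\hhh^i(\cO_S(\beta))=0$ for $i>0$, so Riemann--Roch computes
\begin{equation}
h^0(\cO_S(\beta))=\chi(\cO_S(\beta))=\tfrac12\beta.(\beta-K_S)+1=\beta^2-\beta.K_S-(p_a-1)= \beta^2 + w - p_a + 1. \notag
\end{equation}
Subtracting the length $n-1+p_a$ gives a constant fiber dimension, and I would record that the projective bundle rank $w-n$ follows after simplifying, using $\beta^2 = w + 2p_a - 2$ from the adjunction formula $p_a=\tfrac12\beta.(\beta+K_S)+1$. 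The bundle structure itself comes from noting that the map $H^0(\cO_S(\beta))\otimes\cO_{\mathrm{Hilb}}\to \pi_*\big(\cO_S(\beta)\boxtimes\cO_{\mathcal Z}\big)$ (with $\mathcal Z$ the universal subscheme) has locally free cokernel of constant rank, exhibiting $P_n(S,\beta)$ as the projectivization of its kernel bundle.

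For part~(i), when $\beta$ is a line class, $|\beta|$ consists of a single rigid curve $C\cong\PP^1$ with $p_a=0$, so the Hilbert scheme data degenerates: the curve is fixed and a PT-stable pair reduces to the section data on $\cO_{\PP^1}$, which is governed by $H^0(\cO_{\PP^1}(n-1))\cong\CC^n$ modulo scaling, giving $P_n(S,\beta)\simeq\PP^{n-1}$ directly. I expect the main obstacle to be the rigorous identification of $P_n(S,\beta)$ with the pair space $(C,Z)$ at the level of \emph{moduli} (as opposed to points) — that is, establishing the projective bundle structure scheme-theoretically, which requires constructing the universal section and verifying flatness and constant rank of the relevant pushforward sheaves over $\mathrm{Hilb}^{n-1+p_a}(S)$. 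The very ampleness hypothesis is precisely what guarantees cohomology-and-base-change applies uniformly, so the technical heart is checking that the higher cohomology of $\cO_S(\beta)\otimes\shI_Z$ vanishes in families, which I would handle via the surjectivity furnished by Lemma~\ref{lem:Rocco} together with the base point freeness assumption.
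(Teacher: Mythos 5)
Your proposal follows essentially the same route as the paper: both invoke the Pandharipande--Thomas identification of $P_n(S,\beta)$ with pairs $(C,Z)$ of a curve $C\in|\beta|$ and a length-$(n-1+p_a)$ subscheme $Z\subset C$ (the paper cites \cite[Proposition B.8]{ptbps}), realize this space as $\PP\bigl(p_*(\shI\otimes q^*\shO_S(\beta))\bigr)$ over $\mathrm{Hilb}^{n-1+p_a}(S)$, and use $(n-2+p_a)$-very ampleness together with $H^1(\shO_S(\beta))=0$ from base point freeness (Lemma~\ref{lem:nefbig}) to obtain local freeness and then the rank by Riemann--Roch. The one flaw is an arithmetic slip in your displayed chain: $\chi(\shO_S(\beta))=\tfrac12\beta.(\beta-K_S)+1=w+p_a$ (using $\beta^2=w+2p_a-2$ from adjunction), not $\beta^2+w-p_a+1$; subtracting the length $n-1+p_a$ then gives fibers $\PP^{w-n}$ as required.
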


\begin{proof}
The proof is essentially same as that of \cite[Lemma 2.3]{cc}. By \cite[Proposition B.8]{ptbps}, $P_n(S,\beta)$ is isomorphic to the space of pairs $(C,Z)$ where $C$ is a curve in class $\beta$ and $Z$ is a subscheme of $C$ of length $n-1+p_a$. In particular, the assertion for a line class $\beta$ is straightforward. Note that each line class contains a unique line. 

Now, let $\shI$ be the universal ideal sheaf on $\mathrm{Hilb}^{n-1+p_a}(S)\times S$ and let $p:\mathrm{Hilb}^{n-1+p_a}(S)\times S \to\mathrm{Hilb}^{n-1+p_a}(S) $ and $q:\mathrm{Hilb}^{n-1+p_a}(S)\times S \to S$ be the projections. Then $P_n(S,\beta)$ is the projective bundle $\PP(p_*(\shI \otimes q^*\shO_S(\beta)))$ provided that $p_*(\shI \otimes q^*\shO_S(\beta))$ is locally free. Since $\beta$ is $(n-2+p_a)$-very ample, we have $H^1(I_Z\otimes \shO(\beta))\simeq H^1(\shO(\beta))$ for any subscheme $Z$ of length $n-1+p_a$. The latter space vanishes by Lemma~\ref{lem:nefbig}. 
By the semicontinuity theorem, $p_*(\shI \otimes q^*\shO_S(\beta))$ is locally free and hence $P_n(S,\beta)$ is a projective bundle.

Since $H^1(I_Z\otimes \shO(\beta))\simeq H^1(\shO(\beta))=0$ for base point free $\beta$, the rank of the projective bundle $\PP(p_*(\shI \otimes q^*\shO_S(\beta)))$ can be computed by Riemann-Roch. 
\end{proof}

On the other extreme when $\delta$ is sufficiently small, which we denote by $\delta=0^+$, it is elementary to check that for $(s,F)\in M^{0^+}_{\beta, n}$, the sheaf $F$ is a stable sheaf provided that $(-K_S).\beta$ and $n$ are coprime. In this case, we have a forgetful map \[\xi:M^{0^+}_{\beta, n} \to M_{\beta,n}.\] In what follows, we only consider the case where $n$ is either $1$ or $-1$, so the coprime condition is always satisfied. 

\begin{proposition}\label{prop:eulermd}
$ P_t(M_\beta) =P_t(M^{0^+}_{\beta, 1}) -  t P_t(M^{0^+}_{\beta, -1})$.
\end{proposition}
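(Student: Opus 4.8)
The plan is to analyze the two forgetful maps $\xi_n\colon M^{0^+}_{\beta,n}\to M_{\beta,n}$ (for $n=1,-1$) fiber by fiber and to organize the bookkeeping around the Serre-duality involution relating $M_{\beta,1}$ and $M_{\beta,-1}$. First I would pin down the fibers of $\xi_n$. Since $w=(-K_S).\beta$ and $n\in\{1,-1\}$ are coprime, a $0^+$-stable pair $(s,F)$ is exactly a Gieseker-stable sheaf $F\in M_{\beta,n}$ equipped with a nonzero section $s\in H^0(F)$, taken up to scaling: for $F$ stable every proper subsheaf $G$ satisfies $\chi(G)/r(G)<\chi(F)/r(F)$ strictly, so the $\delta$-term is irrelevant for small $\delta$, while conversely $0^+$-stability forces $F$ stable by coprimality. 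Hence the fiber of $\xi_n$ over $F$ is $\PP(H^0(F))$. As $\chi(F)=n$, we have $h^0(F)=n+h^1(F)$, so $\xi_1$ is surjective (every $F$ carries a section), whereas the image of $\xi_{-1}$ consists of those $F$ with $h^0(F)>0$.

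Next I would stratify. Let $A_m=\{F\in M_{\beta,1}:h^0(F)=m\}$ and $B_m=\{F\in M_{\beta,-1}:h^0(F)=m\}$ be the locally closed loci where $h^0$ is constant. Fixing a universal sheaf on $M_{\beta,n}\times S$ (which exists by coprimality), Grauert's theorem shows that its pushforward to the moduli space is locally free on each stratum with fiber $H^0(F)$; thus $\xi_1$ restricts over $A_m$ to a Zariski-locally trivial $\PP^{m-1}$-bundle, and $\xi_{-1}$ over $B_m$ ($m\ge 1$) likewise. By properties (iii) and (iv) of the virtual Poincar\'e polynomial,
\[ P_t(M^{0^+}_{\beta,1})=\sum_{m\ge 1}P_t(\PP^{m-1})\,P_t(A_m),\qquad P_t(M^{0^+}_{\beta,-1})=\sum_{m\ge 1}P_t(\PP^{m-1})\,P_t(B_m), \]
while $P_t(M_\beta)=P_t(M_{\beta,1})=\sum_{m\ge 1}P_t(A_m)$.

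Then I would invoke the duality $F\mapsto F^D:=\shExt^1(F,K_S)$. Since $F$ is a pure one-dimensional (hence Cohen–Macaulay) sheaf on $S$, one has $\shExt^j(F,K_S)=0$ for $j\ne 1$, so $F^D$ is again pure one-dimensional and stable with $(F^D)^D\cong F$; the local-to-global spectral sequence combined with Serre duality gives $H^j(F)^\vee\cong H^{1-j}(F^D)$, whence $\chi(F^D)=-\chi(F)$ and $h^0(F^D)=h^1(F)$. Carried out in families this yields an isomorphism $M_{\beta,1}\cong M_{\beta,-1}$ sending $A_m$ isomorphically onto $B_{m-1}$ (on $A_m$ one has $h^1(F)=h^0(F)-1=m-1$). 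Writing $a_m:=P_t(A_m)=P_t(B_{m-1})$, substituting $P_t(B_m)=a_{m+1}$ and reindexing gives $P_t(M^{0^+}_{\beta,-1})=\sum_{m\ge 1}\frac{t^{m-1}-1}{t-1}\,a_m$ (the $m=1$ term vanishing), while $P_t(M^{0^+}_{\beta,1})=\sum_{m\ge 1}\frac{t^{m}-1}{t-1}\,a_m$. The single telescoping identity
\[ \frac{(t^{m}-1)-t\,(t^{m-1}-1)}{t-1}=1 \]
then collapses $P_t(M^{0^+}_{\beta,1})-t\,P_t(M^{0^+}_{\beta,-1})$ to $\sum_{m\ge 1}a_m=P_t(M_\beta)$, which is the claim.

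The main obstacle I expect is the family-level treatment rather than the pointwise algebra: one must verify that the relative $\shExt$-dual defines a genuine isomorphism of moduli schemes (not merely a bijection on closed points) compatible with the $h^0$-stratification, and confirm that over each reduced stratum the pushforward of the universal sheaf is locally free so that $\xi_n$ is an honest Zariski-locally-trivial projective bundle, which is precisely what makes property (iv) applicable. Everything else reduces to the index bookkeeping above.
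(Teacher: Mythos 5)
Your proposal is correct and follows essentially the same route as the paper's proof: stratify $M_{\beta,\pm 1}$ by $h^0$, observe that the forgetful maps are Zariski-locally-trivial projective bundles over each stratum, use the duality $F\mapsto F^D=\shExt^1(F,\omega_S)$ to identify the $h^0=m$ stratum of $M_{\beta,1}$ with the $h^0=m-1$ stratum of $M_{\beta,-1}$, and telescope. The only difference is that you spell out the Grauert/universal-family justification for local triviality and the family-level construction of the duality morphism, which the paper delegates to \cite{cc} and \cite{Maican}.
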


\begin{proof}
This formula is proven for $S=\IP^2$ in \cite[Lemma 5.1]{cc}. The same proof applies to general del Pezzo surfaces. We sketch the proof here. 

Let $n$ be either $1$ or $-1$. Let $(M_{\beta,n})_k$ (resp. $(M^{0^+}_{\beta, n})_k$) denote the locus in $M_{\beta,n}$ (resp. $M^{0^+}_{\beta, n}$) defined by the condition $h^0(F)=k$. Then the forgetful map $\xi$ restricted to $(M^{0^+}_{\beta, n})_k$ is a Zariski locally trivial $\PP^{k-1}$-fibration since any nonzero section of $F$ defines a $0^+$-stable pair and an automorphism of a stable sheaf is given by scalar multiplication. Therefore we have 
\[P_t(M^{0^+}_{\beta, n}) =\sum_k P_t((M^{0^+}_{\beta,n})_k)= \sum_k P_t(\PP^k) P_t((M_{\beta,n})_k).\] 

For a sheaf $F\in M_\beta$, we define its dual by $F^D=\mathcal{E}xt^1(F, \omega_S)$. Since $F$ is a pure one-dimensional sheaf, $F^{DD}\simeq F$ (\cite[Proposition 1.1.10]{HL}). The local-to-global spectral sequence $E^{pq}_2=H^p(\mathcal{E}xt^q(F, \omega_S))$ degenerates at level two and hence $h^i(F^D)=h^{1-i}(F)$ for $i=0,1$. Thus the association $F\mapsto F^D$ induces an isomorphism between $(M_{\beta, 1})_k$ and $(M_{\beta, -1})_{k-1}$. The fact that this association is a morphism of schemes is proved in \cite{Maican} when $S=\PP^2$, but the same proof applies to a general del Pezzo surface $S$. 

Therefore we have
\begin{align*}
P_t(M^{0^+}_{\beta, 1}) -  tP_t(M^{0^+}_{\beta, -1}) &= \sum_k P_t(\PP^{k-1}) P_t((M_{\beta,1})_k)- t P_t(\PP^{k-1}) P_t((M_{\beta,-1})_k) \\
&= \sum_k P_t(\PP^{k-1}) P_t((M_{\beta,1})_k)- tP_t(\PP^{k-1}) P_t((M_{\beta,1})_{k+1})\\
&= \sum_k (P_t(\PP^{k-1})-tP_t(\PP^{k-2})) P_t((M_{\beta,1})_k)\\
&= \sum_k P_t((M_{\beta,1})_k) = P_t(M_\beta).
\end{align*}
\end{proof}

Proposition~\ref{prop:eulermd} suggests that the correction terms in \eqref{eq:kkv} come from wall-crossing on $\delta$-stable pairs. More detail on the correspondence between wall-crossing terms and the correction terms can be found in \cite[\S 9.3]{ckk}.

Now we study how the moduli space changes when we cross a wall. Let $\delta_0$ be a wall and let $\delta_-$ and $\delta_+$ be rational numbers sufficiently close to $\delta_0$ such that $\delta_-<\delta_0<\delta_+$ and there are no walls between $\delta_-$ and $\delta_+$ other than $\delta_0$. We want to compare $M^{\delta_+}_{\beta, n}$ and $M^{\delta_-}_{\beta, n}$. 

Let $(s,F)$ be a $\delta_+$-stable pair which is not $\delta_-$-stable. Let $F''$ be a subsheaf of $F$ such that $s$ factors through $F''$. So $\frac{\chi(F'')+\delta_+}{r(F'')} < \frac{\chi(F)+\delta_+}{r(F)}$. Since $r(F'')\le r(F)$, this implies $\frac{\chi(F'')+\delta_-}{r(F'')} < \frac{\chi(F)+\delta_-}{r(F)}$. Therefore for $(s,F)$ to be not $\delta_-$-stable, there must be a subsheaf $F''$ of $F$ such that \[\frac{\chi(F'')}{r(F'')} > \frac{\chi(F)+\delta_-}{r(F)}.\]
Necessarily, the section $s$ does not factor through $F''$. Thus we have an exact sequence of pairs
\begin{equation}
\label{eq:delta+} 0\to (0,F'') \to (s,F) \to (s',F') \to 0, 
\end{equation}
where $F'=F/F''$ and $s'$ is the section on $F'$ induced by $s$. Here, $(0,F'')$ denotes the sheaf $F''$ considered as a coherent system of dimension zero. 

On the other hand, if $(\tilde s,\tilde F)$ is a $\delta_-$-stable pair which is not $\delta_+$-stable, by the same reasoning, we have an exact sequence 
\begin{equation}
\label{eq:delta-}0\to (s',F') \to (\tilde s,\tilde F) \to (0,F'')\to 0. 
\end{equation}

The wall $\delta_0$ is called a \emph{simple wall} if $(s',F')$ is $\delta_0$-stable and $F''$ is stable (as a sheaf) so that there are no further decompositions to be considered. 
In this paper, we will only consider the cases where all walls are simple walls. 

To denote a decomposition as in \eqref{eq:delta+} and \eqref{eq:delta-}, we use the notation
\begin{equation}\label{eq:decomp}
(1,(\beta,n))=(1,(\beta',n'))+(0,(\beta'',n'')),
\end{equation}
where $\beta'=[F']$, $\beta''=[F'']$, $n'= \chi(F')$ and $n''=\chi(F'')$. 
So if there is a wall-crossing for $M^{\delta}_{\beta, n}$, we must have a decomposition \eqref{eq:decomp} such that $M_{\beta'', n''}$ and $M^{\delta_0}_{\beta', n'}$ are nonempty, where $$ \frac{n+\delta_0}{(-K_S).\beta}=\frac{n'+\delta_0}{(-K_S).\beta'}=\frac{n''}{(-K_S).\beta''}.$$ In such a case, the pairs in $M^{\delta_+}_{\beta, n}$ of the form \eqref{eq:delta+} parametrized by $\IP(\Ext^1( (s',F') ,(0,F'')))$ are replaced with the pairs in $M^{\delta_-}_{\beta, n}$ of the form \eqref{eq:delta-} parametrized by $\IP(\Ext^1( (0,F''),(s',F')))$. This wall-crossing phenomenon can be explained by \emph{elementary modification} of pairs. See \cite[\S 3]{thad}, \cite[Lemma.4.24]{mhe} and \cite{cc}. Now each Ext group can be computed using the following proposition. 

\begin{proposition}\cite[Corollary 1.6]{mhe}\label{prop:sespair}
Let $\Lambda=(s, F)$ and $\Lambda'=(s', F')$ be pairs on $X$. Then there is a long exact sequence
\begin{align*}
0&\to \Hom (\Lambda,\Lambda')\to \Hom (F,F')\to H^0(F')/\langle s'\rangle\\
&\to \Ext^1(\Lambda,\Lambda')\to \Ext^1(F,F')\to H^1(F')\\
&\to \Ext^2(\Lambda,\Lambda')\to \Ext^2(F,F')\to H^2(F')\to \cdots.
\end{align*}
\end{proposition}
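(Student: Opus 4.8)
The plan is to recognize the three columns of the asserted sequence as the cohomology groups of a single distinguished triangle, engineered so that both the displayed boundary maps and the degree-zero correction by $\langle s'\rangle$ come out automatically. The two structural inputs are the tautological identification $H^i(F')=\Ext^i(\mathcal{O}_X,F')$ and the section $s\colon\mathcal{O}_X\to F$, which induces by precomposition a map of complexes $s^{*}\colon R\Hom(F,F')\to R\Gamma(F')$; on cohomology this is exactly the arrow $\Ext^i(F,F')\to H^i(F')$ appearing in the sequence.

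First I would encode the ``up to a scalar'' clause in the definition of a morphism of pairs, which is what produces the quotient by $\langle s'\rangle$. The distinguished section $s'$ gives a map of complexes $\CC[0]\xrightarrow{s'}R\Gamma(F')$, and I set $R\Gamma(F')/s':=\mathrm{cone}\big(\CC[0]\xrightarrow{s'}R\Gamma(F')\big)$. Since $s'\neq 0$, a short computation with the cone's long exact sequence gives $\mathbb{H}^0(R\Gamma(F')/s')=H^0(F')/\langle s'\rangle$, $\mathbb{H}^i(R\Gamma(F')/s')=H^i(F')$ for $i\geq 1$, and nothing in negative degrees. This is precisely the third column, with the $\langle s'\rangle$ quotient confined to degree zero, exactly as required.

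Next I would define $R\Hom_{\mathrm{pair}}(\Lambda,\Lambda')$ as the fiber of the composite $\psi\colon R\Hom(F,F')\xrightarrow{s^{*}}R\Gamma(F')\to R\Gamma(F')/s'$, producing a distinguished triangle $R\Hom_{\mathrm{pair}}(\Lambda,\Lambda')\to R\Hom(F,F')\xrightarrow{\psi}R\Gamma(F')/s'\xrightarrow{+1}$. Passing to cohomology yields a long exact sequence of precisely the claimed shape; the vanishing of $\mathbb{H}^{-1}(R\Gamma(F')/s')$ supplies the left-exact start $0\to\Hom(\Lambda,\Lambda')\to\Hom(F,F')\to H^0(F')/\langle s'\rangle$, and the connecting maps become the boundary maps $H^i(F')\to\Ext^{i+1}(\Lambda,\Lambda')$.

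The main obstacle, and the step deserving real care, is to prove that $\mathbb{H}^i\big(R\Hom_{\mathrm{pair}}(\Lambda,\Lambda')\big)$ genuinely computes the intrinsic $\Ext^i$ in the abelian category of pairs (coherent systems of dimension one), rather than merely a plausible derived substitute. I would establish this by realizing that category as modules over a suitable triangular sheaf of algebras (equivalently, as the relevant representations of the datum $\mathcal{O}_X\to F$) and matching derived $\Hom$'s. As a consistency check I would verify the identification by hand in degrees $0$ and $1$, where $\Hom(\Lambda,\Lambda')=\{f\colon F\to F'\mid f\circ s\in\langle s'\rangle\}$ and $\Ext^1(\Lambda,\Lambda')$ classifies extensions of pairs, and then propagate to all $i$ via an effaceability argument identifying both sides as universal $\delta$-functors. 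Forcing the boundary maps and the degree-zero correction to agree on the nose with the categorical definitions is where the homological bookkeeping is most delicate.
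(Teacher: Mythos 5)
First, note that the paper does not prove this proposition at all: it is quoted (in the special case of one-dimensional spaces of sections) from He \cite[Corollary 1.6]{mhe}, so there is no in-paper argument to compare against. Your construction --- realizing the three columns as the cohomology of the distinguished triangle $R\Hom_{\mathrm{pair}}(\Lambda,\Lambda')\to R\Hom(F,F')\to \mathrm{cone}\bigl(\CC\xrightarrow{s'}R\Gamma(F')\bigr)$ --- is the standard derived-category packaging of exactly what underlies He's proof, and the formal part checks out: the cohomology of the cone is as you say (using $s'\neq 0$ for injectivity in degree zero), the vanishing in degree $-1$ yields the left-exact start, and $\mathbb{H}^0$ of the fiber is $\{f\colon F\to F'\mid f\circ s\in\langle s'\rangle\}$, which matches the paper's notion of a morphism of pairs.

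The step you flag as delicate is, however, the entire mathematical content, and as written it remains a gap rather than a proof. The category of pairs as defined in the paper (morphisms of sheaves preserving the section only up to a scalar) is not even additive, so ``$\Ext^i(\Lambda,\Lambda')$'' has no a priori meaning as a derived functor there; before one can ``identify both sides as universal $\delta$-functors'' one must say what the left-hand side is. He's resolution of this is to embed coherent systems into the abelian category of algebraic systems --- triples $(V,F,\,V\otimes\shO_X\to F)$ with morphisms honest pairs of maps, equivalently modules over the triangular algebra you allude to --- where $\Ext^i$ is defined by genuine resolutions; the long exact sequence is then obtained by applying $R\Hom(-,\Lambda')$ to the canonical two-term resolution of $\Lambda$ by objects of the form $(0,E)$ and $(V,V\otimes\shO_X)$, which produces precisely your fiber complex. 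So your plan succeeds, but the effaceability step should be replaced by (or made precise as) this explicit resolution computation. One further point worth verifying, since it is what the paper actually uses in the wall-crossing arguments: that the group your complex computes in degree one agrees with the Yoneda $\Ext^1$ classifying extensions of pairs such as \eqref{eq:delta+} and \eqref{eq:delta-}. This again follows from the algebraic-systems formulation, but it is not automatic from a formally constructed triangle.
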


\section{Calculations of local BPS numbers}\label{sec:localcal}

We calculate the local BPS numbers by applying the wall-crossing techniques described in the previous sections. In this section, we assume that $\beta$ is either a line class, a conic class or a nef and big curve class so that there is a smooth irreducible curves in class $\beta$. 

When $\beta$ is nef and big, we have $H^i(\beta +K_S)=0$ for $i>0$, which is due to Ramanujam \cite{Ram}, \cite[Theorem 4.3.1]{Laz}. Therefore 
\begin{equation}\label{eq:h0}
h^0(\beta+K_S)=\chi(\beta+K_S)=\frac12 (\beta+K_S)\beta+1 = p_a(\beta).
\end{equation}

\subsection{Arithmetic genus 0}\label{sec:localg0}
For a nef and big curve class of arithmetic genus $0$ on $S=S_r$, $\beta+K_S$ is not nef 
since $(\beta+K_S)\beta=-2$. 
Hence if $r \ge 2$, there is a line $l$ on $S$ with $(\beta+K_S).l<0$,
and $\beta.l=0$ follows from the nefness of $\beta$. 
By blowing down such lines, we see that $\beta$ is a pullback of the class $(1)$ or $(2)$ on $\PP^2$, the class $(d; d-1)$ on $S_1$ with $d\ge 2$,
or the class $(1, k)$ on $\pone\times\pone$ with $k\ge1$.

\begin{proposition}\label{prop:g0}
  Let $\beta$ be a curve class on $S$ of arithmetic genus 0. If $\beta$ is either a line class, a conic class or a nef and big curve class, then $M_\beta$ is isomorphic to $\PP^{w-1}$. 
\end{proposition}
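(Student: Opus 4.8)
The plan is to exhibit the isomorphism through the Chow (support) morphism $c\colon M_\beta\to|\beta|$ constructed via the Fitting ideal in \S\ref{sec:blowup}, and to prove that it is an isomorphism onto $|\beta|\cong\PP^{w-1}$. First I would match the two sides numerically. Since $p_a(\beta)=0$, the formula $p_a(\beta)=\tfrac12\beta(\beta+K_S)+1$ gives $\beta(\beta+K_S)=-2$, hence $\beta^2=w-2$, and by Proposition~\ref{prop:bps equiv} the space $M_\beta$ is smooth of dimension $\beta^2+1=w-1$. For the conic and the nef-and-big cases the class $\beta$ is base point free: it is nef and it is not $-K_{S_8}$ (because $p_a(-K_{S_8})=1$), so Lemma~\ref{lem:Rocconef} applies; then Lemma~\ref{lem:nefbig} gives $H^1(\beta)=H^2(\beta)=0$, and Riemann--Roch yields $h^0(\beta)=\chi(\beta)=\tfrac12(\beta^2-\beta.K_S)+1=w$, so $|\beta|\cong\PP^{w-1}$. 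The line class case ($w=1$) is immediate, since a line class contains a unique line $l\cong\PP^1$, whence $M_l$ is the single reduced point $[\cO_l]$ and $|\beta|=\PP^0$.

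Next I would show that $c$ is birational and then bijective. Over the dense open $U\subseteq|\beta|$ of smooth members (nonempty by Lemma~\ref{lem:nefbig}, resp.\ by the ruling structure in the conic case), each curve $C$ is a smooth rational curve, so a stable sheaf $F$ with $\chi(F)=1$ and $\supp(F)=C$ is a degree-$0$ line bundle, i.e.\ $F\simeq\cO_C$; thus $c$ is an isomorphism over $U$, and in particular birational. The crux is to prove that $c$ is \emph{bijective}, i.e.\ that over every $C\in|\beta|$ the only stable sheaf of Euler characteristic $1$ supported on $C$ is $\cO_C$; equivalently, the compactified Jacobian (in the degree forcing $\chi=1$) of each member of $|\beta|$ is a single reduced point. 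For reduced members this follows from $p_a(\beta)=0$: using $H^1(\cO_S(-\beta))\cong H^1(K_S+\beta)^\vee=0$, every $C\in|\beta|$ is connected, and a connected reduced curve of arithmetic genus $0$ is a tree of smooth rational curves, whose moduli of stable rank-$1$ sheaves is a point. To keep the degenerate members manageable, I would first apply Proposition~\ref{prop:pullback}: blowing down all $(-1)$-curves $l$ with $\beta.l=0$ leaves both $M_\beta$ and $w$ unchanged and reduces the problem to the explicit base classes $(1),(2)$ on $\PP^2$, $(d;d-1)$ on $S_1$ with $d\ge2$, and $(1,k)$ on $\pone\times\pone$, so that only a handful of linear systems need to be examined.

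Finally, granting bijectivity, I would conclude exactly as in the proof of Proposition~\ref{prop:pullback}: the morphism $c$ is a proper, birational bijection between smooth projective varieties, so Zariski's main theorem forces it to be an isomorphism, giving $M_\beta\cong\PP^{w-1}$. I expect the main obstacle to be the uniqueness statement over the \emph{non-reduced} members of $|\beta|$, for instance the double lines in $|2h|$ on $\PP^2$, where one must rule out a positive-dimensional family of stable sheaves on a genus-$0$ ribbon and check that the fiber of $c$ there is reduced; this is where the genuine work lies. As an alternative to the direct fiber analysis, one can combine Proposition~\ref{prop:PT} (which gives $P_1(S,\beta)\cong\PP^{w-1}$, since $\mathrm{Hilb}^{0}(S)$ is a point) with the forgetful map $\xi\colon M^{0^+}_{\beta,1}\to M_\beta$ appearing in Proposition~\ref{prop:eulermd}; this map is an isomorphism precisely when $h^0(F)=1$ for all $F\in M_\beta$, which reduces the same obstacle to the vanishing $h^1(F)=0$ for every stable $F$ in the genus-$0$ moduli space.
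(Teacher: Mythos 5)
Your setup is sound: the numerical identifications $\dim M_\beta=\beta^2+1=w-1$ and $h^0(\beta)=w$ are correct, the reduction via Proposition~\ref{prop:pullback} to the classes $(1),(2)$ on $\PP^2$, $(d;d-1)$ on $S_1$ and $(1,k)$ on $\pone\times\pone$ is exactly the reduction the paper makes, and the Zariski's-main-theorem endgame is fine. But the proof has a genuine gap at precisely the point you flag yourself: you never establish that the fiber of the Chow morphism over an arbitrary (in particular non-reduced) member $C\in|\beta|$ is the single reduced point $[\cO_C]$. Saying ``this is where the genuine work lies'' and then not doing the work means the crux of the proposition is unproved; your alternative route through $P_1(S,\beta)$ and the forgetful map $\xi$ only relocates the same unproved statement to the vanishing $h^1(F)=0$. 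Even your treatment of the reduced degenerate members is thinner than it looks: you would still need to rule out stable pushforwards from partial normalizations of a tree of rational curves, not just non-trivial line bundles.

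The missing idea, which is how the paper closes this in two lines, is to work with the sheaf $F$ directly rather than fiber-by-fiber over $|\beta|$. Since $F$ is one-dimensional and $\chi(F)=1$, one has $h^0(F)\ge 1$; a nonzero section gives $i:\shO_S\to F$ whose image is $\shO_{C'}$ for the curve $C'$ cut out by $\ker i$, with class $\beta'=[C']$ effective and $\beta-\beta'$ effective. For each of the classes in your reduced list, every such subclass satisfies $p_a(\beta')\le 0$, i.e.\ $\chi(\shO_{C'})=1-p_a(\beta')\ge 1$, so if $\beta'\ne\beta$ the subsheaf $\shO_{C'}$ destabilizes $F$ (its reduced Euler characteristic is at least $1/r(\shO_{C'})>1/r(F)$). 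Hence $\beta'=\beta$, the quotient $F/\shO_{C'}$ is zero-dimensional with $\chi=0$, and $F\simeq\shO_C$. This argument is insensitive to whether $C$ is reduced or irreducible, so the non-reduced members that you identify as the obstacle require no separate analysis. I would encourage you to replace your fiberwise discussion with this stability argument; the rest of your write-up then goes through.
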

\begin{proof}

In the  nef and big case, 
we may assume that $(S, \beta)$ is $(\PP^2,(1))$, $(\PP^2,(2))$, $(S_1, (d; d-1))$ with $d\ge2$, 
or $(\pone\times\pone, (1, k))$ with $k\ge1$ 
by Proposition~\ref{prop:pullback} and the preceding discussion. 

Let $F$ be a stable sheaf with $\chi(F)=1$. Then there is a nonzero section $s \in H^0(F)$ which induces a morphism $i:\shO_{S} \to F$. Let $C'$ be the curve on $S$ defined by the kernel of $i$. Put $\beta' = [C']$. Then if $\beta'\ne \beta$, stability is contradicted because $p_a(\beta')\le 0$
as can be seen using the description of $\beta$ in each case. 
We conclude that $\beta' = \beta$ and $F \simeq \shO_C$ where $C$ is in class $\beta$. Therefore $M_\beta$ is isomorphic to the complete linear system $|\shO(\beta)|\simeq \PP^{w-1}$.
\end{proof}

\begin{Corollary}
$P_t(M_\beta)=\displaystyle\frac{1-t^w}{1-t}$ and $n_\beta=(-1)^{w-1}w$.
\end{Corollary}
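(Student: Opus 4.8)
The plan is to prove the Corollary as a direct computation from Proposition~\ref{prop:g0}, which already identifies $M_\beta$ with the projective space $\PP^{w-1}$ in all three cases (line class, conic class, nef and big class of arithmetic genus $0$). Since the hard geometric work — establishing the isomorphism $M_\beta \simeq \PP^{w-1}$ — is done in the proposition, the Corollary is essentially a matter of reading off the Poincar\'{e} polynomial and the Euler characteristic and feeding them into the formula of Proposition~\ref{prop:bps equiv}.

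First I would invoke Proposition~\ref{prop:g0} to conclude $M_\beta \simeq \PP^{w-1}$. Since $\PP^{w-1}$ is smooth and projective, property (i) of the virtual Poincar\'{e} polynomial applies, and its cohomology is well known to be $\QQ$ in each even degree $0, 2, \dots, 2(w-1)$ and zero in odd degrees. Hence
\begin{equation*}
P_t(M_\beta) = P_t(\PP^{w-1}) = 1 + t + t^2 + \cdots + t^{w-1} = \frac{1-t^w}{1-t},
\end{equation*}
which gives the first assertion. In particular the odd cohomology vanishes, as was promised in general for the $M_\beta$ under consideration.

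Next I would compute the BPS invariant using Proposition~\ref{prop:bps equiv}(ii), namely $n_\beta = (-1)^{\beta^2+1} e(M_\beta)$. Since $\beta$ has arithmetic genus $0$, adjunction gives $\beta(\beta + K_S) = -2$, so $\beta^2 = -2 - \beta.K_S = -2 + w$; thus $\beta^2 + 1 = w - 1$ and the sign is $(-1)^{w-1}$. The Euler characteristic is $e(M_\beta) = e(\PP^{w-1}) = P_1(M_\beta) = w$, obtained by evaluating the Poincar\'{e} polynomial at $t=1$ (equivalently, counting the $w$ even-dimensional cells of $\PP^{w-1}$). Combining these yields
\begin{equation*}
n_\beta = (-1)^{w-1} w,
\end{equation*}
as claimed.

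There is no serious obstacle here, since the substance of the argument is Proposition~\ref{prop:g0}; the only point requiring a moment's care is the arithmetic genus $0$ computation of the sign, i.e.\ verifying $\beta^2 + 1 = w - 1$ via adjunction so that the parity of the dimension matches $(-1)^{w-1}$. One should also confirm that $e(M_\beta) = P_1(M_\beta)$ is legitimate, which holds because the odd cohomology of $\PP^{w-1}$ vanishes, so the signed count in the Euler characteristic coincides with the unsigned Betti-number sum $P_1(M_\beta)$.
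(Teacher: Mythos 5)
Your argument is correct and is exactly the route the paper intends: the corollary is an immediate consequence of Proposition~\ref{prop:g0} combined with Proposition~\ref{prop:bps equiv}(ii), with the sign determined by $\beta^2+1=w-1$ via adjunction for $p_a(\beta)=0$. The paper leaves this unproved precisely because it is this direct computation, so there is nothing to add.
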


\subsection{Arithmetic genus 1}\label{sec:localg1}

By Proposition~\ref{prop:pullback}, blowing down all lines $l$ with $\beta.l=0$ does not change the moduli space of stable sheaves. 

\begin{lemma}\label{lem:g1classes}
Let $\beta$ be a nef and big curve class on a del Pezzo surface $S$ of arithmetic genus $1$ such that $\beta.l\ge 1$ for all line classes $l$. Then $\beta=-K_{S_r}$ for $0\le r \le 8$ or $\beta=-K_{\pone\times\pone}$.
\end{lemma}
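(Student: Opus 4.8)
The plan is to prove that the class $D:=\beta+K_S$ vanishes, which is equivalent to the assertion. Since $\beta$ is nef and big with $p_a(\beta)=1$, equation~\eqref{eq:h0} gives $h^0(\beta+K_S)=p_a(\beta)=1$, so $D$ is effective; suppose toward a contradiction that $D\neq 0$ and write it as an effective divisor $D=\sum_i a_iC_i$ with the $C_i$ distinct irreducible curves and $a_i>0$. The genus hypothesis $p_a(\beta)=1$ is equivalent to $\beta\cdot(\beta+K_S)=0$, i.e.\ $\beta\cdot D=0$. As $\beta$ is nef, each $\beta\cdot C_i\ge 0$, and since these sum with positive weights to $\beta\cdot D=0$, every component satisfies $\beta\cdot C_i=0$.

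Next I would feed $\beta\cdot D=0$ into the Hodge index theorem. Because $\beta$ is nef and big we have $\beta^2>0$, so the orthogonal complement $\beta^\perp\subset\pic(S)\otimes\RR$ is negative definite; as $D\in\beta^\perp$ this gives $D^2\le 0$. Moreover $-K_S$ is ample and $D$ is a nonzero effective divisor, so $D\cdot(-K_S)>0$ and $D$ is not numerically trivial; hence $D^2<0$ strictly. Writing $D^2=\sum_i a_i(D\cdot C_i)$, some index $i$ must satisfy $D\cdot C_i<0$, and since distinct irreducible curves meet nonnegatively, the expansion $D\cdot C_i=a_iC_i^2+\sum_{j\ne i}a_j(C_i\cdot C_j)$ forces $C_i^2<0$.

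The conceptual heart of the argument is then to identify any such negative curve. By adjunction an irreducible curve $C$ on $S$ satisfies $C^2=2p_a(C)-2+C\cdot(-K_S)$; since $p_a(C)\ge 0$ and $C\cdot(-K_S)\ge 1$ (as $-K_S$ is ample), this yields $C^2\ge -1$, with equality precisely when $p_a(C)=0$ and $C\cdot(-K_S)=1$, that is, when $C$ is a line. Thus $C_i^2<0$ forces $C_i$ to be a line with $\beta\cdot C_i=0$. For $S=S_r$ this directly contradicts the hypothesis $\beta\cdot l\ge 1$ for all line classes $l$; for $S=\pone\times\pone$ there are no line classes at all (every class has even self-intersection), so no negative irreducible curve exists and the contradiction is reached one step earlier. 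In both cases $D=0$, hence $\beta=-K_S$.

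I expect the only real subtlety to be the passage from $D^2\le 0$ to $D^2<0$ --- this is exactly where ampleness of $-K_S$ (ruling out a nonzero numerically trivial effective divisor) is indispensable --- together with the adjunction bound that collapses ``negative irreducible curve'' to ``line''. Everything else is bookkeeping. As a consistency check I would verify the extremal case $\beta=-K_{S_8}$: it is nef and big of arithmetic genus $1$ and meets every line in $1$, so it satisfies the hypotheses and is indeed the conclusion, confirming that the statement is sharp.
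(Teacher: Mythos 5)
Your proof is correct and complete, but it follows a genuinely different route from the paper's. Both arguments open identically: $D:=\beta+K_S$ is effective by \eqref{eq:h0} and satisfies $\beta.D=2p_a(\beta)-2=0$. From there the paper reduces the lemma to showing that $\beta$ is \emph{ample} (since an ample class meets any nonzero effective class positively, $\beta.D=0$ then forces $D=0$), and verifies ampleness case by case: for $S_r$ with $r\ge 2$ it invokes the standard description of the cone of curves of a del Pezzo surface (positivity on all lines, together with nef and big, gives ampleness), while $\PP^2$, $\pone\times\pone$ and $S_1$ are handled by separate short arguments. You instead bypass ampleness entirely and analyze the components of $D$ directly: the Hodge index theorem gives $D^2<0$ for a nonzero $D\in\beta^\perp$, hence some irreducible component $C_i$ has $C_i^2<0$, and adjunction on a del Pezzo surface ($C^2=2p_a(C)-2+C.(-K_S)\ge -1$) identifies $C_i$ as a line with $\beta.C_i=0$, contradicting the hypothesis. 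In effect you re-prove the one piece of cone structure the paper cites, namely that the only negative irreducible curves on a del Pezzo surface are the $(-1)$-lines. What your version buys is uniformity (no case division over $r$, $\PP^2$, $\pone\times\pone$) and a self-contained argument that makes transparent exactly where the hypothesis $\beta.l\ge 1$ is used; what the paper's version buys is brevity, given that the cone description of del Pezzo surfaces is standard. Your closing consistency check on $\beta=-K_{S_8}$ is apt, since that class is indeed the boundary case treated separately elsewhere in the paper.
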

\begin{proof}
We have $\beta(\beta+K_S)=2p_a(\beta)-2=0$. By \eqref{eq:h0}, we have $h^0(\beta+K_S)=1$. Therefore $\beta+K_S$ is effective. 
Hence it is enough to show that $\beta$ is ample. 

If $S=S_r$ with $r\geq 2$, 
$\beta$ is ample  
from the assumption that $\beta.l\ge 1$ holds for all line classes $l$. 
On $\ptwo$ or $\pone\times\pone$, 
any nef and big class is ample.  
On $S_1$, 
$\beta=(d; a)$ satisfies $\beta.(h-e_1)=d-a\geq 1$.
It follows that $d>a$, and $\beta$ is ample. 
\end{proof}

Suppose two distinct lines $l_1$ and $l_2$ satisfy $\beta.l_1=\beta.l_2=0$, then since $\beta(l_1+l_2)=0$, by the Hodge index theorem, $(l_1+l_2)^2<0$, which implies $l_1.l_2=0$. Therefore they are mutually disjoint and the number of them is at most $r$.

After blowing down all lines $l$ with $\beta.l=0$, we may assume that $\beta=-K_S$. When $0\le r\le 7$, $\beta=-K_S$ is base point free. The case $\beta=-K_{S_8}$ is the only case where $\beta$ is neither base point free nor can be blown down to a base point free curve class. We will study this exceptional case in Example~\ref{exam:KS8}.

\begin{proposition} \label{local bps g1}
Let $\beta$ be a nef and big curve class on $S=S_r$ of arithmetic genus 1 and $\beta\ne -K_{S_8}$. Let $\eta$ be the maximum number of disjoint lines $l$ such that $\beta. l=0$. Then $$P_t(M_\beta)=\displaystyle\frac{1-t^w}{1-t}\left(1+(\euler(S)-2-\eta)t +t^2\right)$$ and $n_\beta= (-1)^{w-1} w (\euler(S)-\eta)$.
\end{proposition}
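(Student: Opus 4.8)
The plan is to run the $\delta$-stable pair wall-crossing of \S\ref{sec:pairs}, comparing the PT moduli space at $\delta=\infty$ with the moduli at $\delta=0^+$ and then feeding the result into Proposition~\ref{prop:eulermd}. First I would reduce to $\beta=-K_S$ with $S=S_r$, $r\le 7$. By Proposition~\ref{prop:pullback}, blowing down the $\eta$ mutually disjoint $(-1)$-curves $l$ with $\beta.l=0$ leaves $M_\beta$ unchanged, and Lemma~\ref{lem:g1classes} identifies the pushed-forward class with $-K_{S'}$ on $S'=S_{r-\eta}$ (with $S'\ne S_8$ since $\beta\ne -K_{S_8}$). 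Because $w$ is preserved and $\euler(S_{r+1})=\euler(S_r)+1$ exactly compensates the drop of $\eta$ by one at each blow-down, both sides of the asserted identity are invariant under this reduction; on $S'$ one has $\eta=0$ and $-K_{S'}$ ample and base point free. So it suffices to treat $\beta=-K_S$ with $\eta=0$.

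For the base of the wall-crossing I would invoke Proposition~\ref{prop:PT}: with $n=1$ and $p_a=1$ the class $\beta=-K_S$ is base point free and $(n-2+p_a)=0$-very ample, so $M^\infty_{\beta,1}=P_1(S,\beta)$ is a $\PP^{w-1}$-bundle over $\mathrm{Hilb}^{1}(S)=S$, whence
\[
P_t(M^\infty_{\beta,1})=\frac{1-t^w}{1-t}\,P_t(S)=\frac{1-t^w}{1-t}\bigl(1+(\euler(S)-2)t+t^2\bigr).
\]
On the other hand $M^\infty_{\beta,-1}=P_{-1}(S,\beta)=\emptyset$, since $-1<1-p_a$.

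The heart of the argument, and the step I expect to be the main obstacle, is to prove that nothing changes between $\delta=\infty$ and $\delta=0^+$: there are \emph{no} walls for $n=1$, and $M^{0^+}_{\beta,-1}$ is empty. Both facts rest on a single combinatorial input, namely that every effective class $\gamma$ with $-K_S-\gamma$ effective and \emph{nonzero} has $p_a(\gamma)\le 0$ (geometrically, a proper subcurve of an arithmetic-genus-$1$ anticanonical curve is a union of genus-$0$ components that is ``less connected'', hence of non-positive arithmetic genus), $-K_S$ itself being the unique effective subclass of genus $1$. For $M^{0^+}_{\beta,-1}$: a nonzero section of a stable $F$ with $\chi(F)=-1$ yields a subsheaf $\mathcal{O}_D\hookrightarrow F$ with $\chi(\mathcal{O}_D)=1-p_a(D)\ge 0$, so $\chi(\mathcal{O}_D)/w(D)\ge 0>-1/w$, contradicting stability; hence $h^0(F)=0$ and $M^{0^+}_{\beta,-1}=\emptyset$. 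For the absence of walls at $n=1$: a wall would give a decomposition \eqref{eq:delta+}, in the notation of \eqref{eq:decomp} $(1,(\beta,1))=(1,(\beta',n'))+(0,(\beta'',n''))$ with $\beta''\ne 0$, in which the induced section $s'\ne 0$ on $F'$ (of class $\beta'=-K_S-\beta''$) has image $\mathcal{O}_D$ with $[D]=\gamma\le\beta'$ a proper subclass, so $p_a(D)\le 0$ and $w(D)<w$. Plugging $\mathcal{O}_D$ into the $\delta_0$-(semi)stability inequality for the sub-pair of $(s',F')$ (made precise via Proposition~\ref{prop:sespair}) forces $\tfrac{1-p_a(D)+\delta_0}{w(D)}>\tfrac{1+\delta_0}{w}$, a contradiction. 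Thus the genus bound rules out the walls outright, and $M^{0^+}_{\beta,1}=M^\infty_{\beta,1}$.

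Finally I would assemble. Since $M^{0^+}_{\beta,1}=M^\infty_{\beta,1}$ and $M^{0^+}_{\beta,-1}=\emptyset$, Proposition~\ref{prop:eulermd} gives
\[
P_t(M_\beta)=P_t(M^{0^+}_{\beta,1})-t\,P_t(M^{0^+}_{\beta,-1})=\frac{1-t^w}{1-t}\bigl(1+(\euler(S)-2)t+t^2\bigr),
\]
which, after undoing the reduction (replacing $\euler(S)$ by $\euler(S)-\eta$), is the claimed Poincar\'e polynomial. Setting $t=1$ yields $P_1(M_\beta)=w\,(\euler(S)-\eta)$; and since $p_a(\beta)=1$ forces $\beta^2=(-K_S).\beta=w$, Proposition~\ref{prop:bps equiv}(ii) gives $n_\beta=(-1)^{\beta^2+1}\euler(M_\beta)=(-1)^{w-1}w\,(\euler(S)-\eta)$, as asserted.
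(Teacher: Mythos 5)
Your proposal is correct and follows essentially the same route as the paper: reduce to $\beta=-K_{S'}$ by Proposition~\ref{prop:pullback} and Lemma~\ref{lem:g1classes}, identify $P_1(S',-K_{S'})$ as a $\PP^{w-1}$-bundle over $S'$ via Proposition~\ref{prop:PT}, show there are no walls and that $M^{0^+}_{\beta,-1}=\emptyset$, and conclude with Proposition~\ref{prop:eulermd}. The only divergence is in ruling out walls: the paper first pins down $n'=0$, $n''=1$ and then checks against the table of curve classes that no decomposition with $p_a(\beta')\ge 1$ exists, whereas you appeal to the single claim that any effective $\gamma$ with $-K_S-\gamma$ effective and nonzero has $p_a(\gamma)\le 0$ — your ``less connected'' justification of that claim is only a heuristic, but the claim itself is true and follows quickly from adjunction (which gives $p_a(\gamma)=1-\tfrac12\,\gamma.(-K_S-\gamma)$) together with ampleness of $-K_{S'}$ and the Hodge index theorem, so this is on par with the paper's own ``one can numerically check.''
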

\begin{proof}
Let $\pi:S\to S'$ be the blowing-down of all lines $l$ such that $\beta.l=0$. By Proposition~\ref{prop:pullback}, $M_\beta(S)\simeq M_{\pi_*\beta}(S')$. By the remark before Proposition~\ref{prop:PT}, $P_{-1}(S', \pi_*\beta)$ is empty, which implies that the forgetful map $\xi:M^{0^+}_{\pi_*\beta, 1}(S') \to M_{\pi_*\beta}(S')$ is an isomorphism by the proof of Proposition \ref{prop:eulermd}. Hence, $M_\beta(S)$ is isomorphic to $M^{0^+}_{\pi_*\beta, 1}(S')$.

If $\beta\ne -K_{S_8}$, then by Lemma \ref{lem:g1classes} $S'$ is either $\pone\times\pone$ or $S_r$ with $0\le r\le 7$ and $\pi_*\beta=-K_{S'}$, which is base point free. So, by Proposition~\ref{prop:PT}, $P_1(S',\pi_*\beta)$ is a $\PP^{w-1}$-bundle over $S'$. One can check that there is no wall-crossing for stable pairs in this case so that $M^{0^+}_{\pi_*\beta, 1}(S')$ is isomorphic to $M^{\infty}_{\pi_*\beta, 1}(S')= P_1(S',\pi_*\beta)$. Indeed, at a wall $\delta_0$, we have a decomposition of the form
\begin{equation}\label{eq:decomp2}
(1,(\beta,1))=(1,(\beta',n'))+(0,(\beta'',n'')),
\end{equation}
where $\beta=\beta'+\beta''$, $n'+n''=1$ and $\delta_0=\frac{w}{(-K)\beta''} n''-1$.  Since $\delta_0$ must be positive, we see that $n'=0$ and $n''=1$. Now to have a nontrivial wall-crossing, there must be a sheaf $F'$ with $[F']=\beta'$ and $\chi(F')=0$, which in addition has a nontrivial section. Consequently, $p_a(\beta')\ge 1$. But one can numerically check that when $\beta=-K_{S'}$, such a decomposition does not exist. See for example the list of curve classes in \cite[Appendix A]{kkv}.

Therefore $M_\beta(S)$ is isomorphic to a $\PP^{w-1}$-bundle over $S'$, hence the results follow. 
\end{proof}

\begin{remark}
Blowing down in the proof of Proposition~\ref{local bps g1} corresponds to the wall-crossing in pairs. When $\beta$ is nef and big, by Proposition~\ref{prop:PT}, $P_1(S, \beta)$ is a $\PP^{w-1}$-bundle over $S$. 
For each line $l$ such that $\beta.l=0$, we have a decomposition of the form 
\begin{equation}\label{eq:g1decomp}(1,(\beta,1))=(1,(\beta',0)) +(0,(l,1)),\end{equation}
where $\beta'$ is a curve of arithmetic genus 1 and $\beta'.l=1$. The corresponding wall is at $\delta_0=w -1>0$. 
A pair in $(1,(\beta',0))$ is of the form $(s, \shO_{\beta'})$ and a pair in $(0,(l,1))$ is of the form $(0, \shO_l)$. 

By using Proposition~\ref{prop:sespair}, we have 
\begin{align*}
\Ext^1(  (s, \shO_{\beta'}), (0, \shO_l)   )&\simeq  \CC^2,\\
\Ext^1( (0, \shO_l) , (s, \shO_{\beta'})  )&\simeq  \CC.
\end{align*}
Hence by wall-crossing at $\delta_0$, $\eta$ copies of a $\PP^1$ bundle over $\IP^{w-1}\times \IP^0$ in $M^{\delta_+}_{\beta, 1}$ are replaced with $\eta$ copies of a $\PP^0$ bundle over $\IP^{w-1}\times \IP^0$ in $M^{\delta_-}_{\beta, 1}$. One can check that this wall-crossing is in fact a blow-up $\rho: M^{\delta_+}_{\beta, 1} \to M^{\delta_-}_{\beta, 1}$ along the locus isomorphic to $\eta$ copies of $\IP^{w-1}\times \IP^0$. 

Therefore, 
$$P_t(M_\beta)=\displaystyle\frac{1-t^w}{1-t}\left((1+(e(S)-2)t +t^2) - \eta (1+t-1)\right)$$
as required.
\end{remark}

\begin{example}\label{exam:KS8}
Let $\beta=-K_{S_8}$ on $S=S_8$. Then $\beta$ is nef and big but not $0$-very ample because the linear system $|-K_{S_8}|$ has a base point. So Proposition~\ref{local bps g1} does not apply. In fact by Lemma~\ref{lem:Rocco}, this is the only case where $p_a(\beta)=1$ and $\beta$ is not $0$-very ample while there is no line class $l$ such that $\beta. l\le 0$. In this case, we can directly calculate the local BPS number. Since $w=K_{S_8}^2=1$, there are no wall-crossings. Also, $P_{-1}(S, \beta)$ is empty. Hence $M_\beta\simeq P_{1}(S_8, -K_{S_8})$. The moduli space $P_{1}(S_8, -K_{S_8})$ is the space of pairs $(C,p)$ of a point $p$ on $\ptwo$ and a cubic curve $C$ passing through $p$ and the 8 points of the blow-up. Hence it is the total space of the pencil of cubic curves and is isomorphic to $\ptwo$ blown up at 9 base points of the pencil. We see that $P_t(M_\beta)= 1+10t+t^2 $ and $n_\beta=12=\euler(S_8)+1$. 
\end{example}

\subsection{Arithmetic genus 2}\label{sec:localg2}

Now we compute the local BPS invariants for curve classes $\beta\not=-2K_{S_8}$ with arithmetic genus 2. By Proposition~\ref{prop:pullback}, it suffices to consider very ample classes by blowing down all lines $l$ with $\beta.l=0$. The following lemma shows that there are only finitely many such classes. 

\begin{lemma}\label{lem:vamp} 
If $\beta$ is a very ample curve class on a del Pezzo surface $S$ of arithmetic genus $2$, then $\beta+K_S$ is effective with $p_a(\beta+K_S)=0$ and $(-K_S)(\beta+K_S)=2$. Hence $\beta+K_S$ is a conic class as in \eqref{g0d2cl}.
\end{lemma}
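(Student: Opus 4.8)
The plan is to use adjunction together with the intersection-theoretic constraints coming from the arithmetic genus $p_a(\beta)=2$ and the very ampleness of $\beta$. First I would compute the numerical invariants of $\beta+K_S$ directly. Since $p_a(\beta)=2$, adjunction gives $\beta(\beta+K_S)=2p_a(\beta)-2=2$. I would then compute $(\beta+K_S)^2=\beta^2+2\beta.K_S+K_S^2$ and $(-K_S)(\beta+K_S)$ using $K_S^2=\euler(S)$-type relations (or more directly $K_S^2=9-r$ on $S_r$) and the value $w=(-K_S).\beta$. The key identity to extract is $(-K_S)(\beta+K_S)=(-K_S).\beta-(-K_S).(-K_S)=w-K_S^2$; I expect this to simplify to $2$ precisely because very ampleness forces $w$ to take the right value relative to $K_S^2$.

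\medskip

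Next I would establish effectivity of $\beta+K_S$. Since $\beta$ is nef and big (being very ample), equation~\eqref{eq:h0} applies and gives $h^0(\beta+K_S)=\chi(\beta+K_S)=p_a(\beta)=2>0$, so $\beta+K_S$ is effective. With effectivity in hand, the remaining task is to pin down $p_a(\beta+K_S)=0$ and $(-K_S)(\beta+K_S)=2$, which by the definition of a conic class (Definition following Lemma~\ref{lem:Rocconef}) identifies $\beta+K_S$ as a conic class, hence a member of the list \eqref{g0d2cl}.

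\medskip

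For the arithmetic genus of $\beta+K_S$, I would apply adjunction again: writing $\gamma=\beta+K_S$, we have $p_a(\gamma)=\tfrac12\gamma(\gamma+K_S)+1$. Expanding $\gamma(\gamma+K_S)=(\beta+K_S)(\beta+2K_S)$ and using the already-computed values $\beta(\beta+K_S)=2$ together with $K_S(\beta+K_S)$ should collapse this to $p_a(\gamma)=0$. The main obstacle I anticipate is not any single computation but rather justifying the numerical value of $w$ (equivalently $(-K_S).\gamma=2$) uniformly across all del Pezzo surfaces: very ampleness via Lemma~\ref{lem:Rocco} gives $\beta.l\ge 1$ for every line class $l$, and I would need to convert this positivity, combined with $\beta(\beta+K_S)=2$ and the effectivity of $\gamma$, into the sharp equality $(-K_S).\gamma=2$. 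The cleanest route is probably to note that an effective $\gamma$ with $\gamma.\beta=2$ and $\beta$ ample forces $(-K_S).\gamma$ to be small and positive, then rule out the value $1$ (which would make $\gamma$ a line class, contradicting $\gamma.\beta=2$ when $\beta$ is very ample) to land on $2$.

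\medskip

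\emph{Remark on scope.} Throughout I would treat $\PP^1\times\PP^1$ and the exceptional case $\beta=-2K_{S_8}$ separately as flagged in the statement; the excluded class $-2K_{S_8}$ is exactly where $\beta$ fails to be very ample by Lemma~\ref{lem:Rocco}, so it is consistent that it be omitted here, and the very-ampleness hypothesis is what makes the clean identification with a conic class go through.
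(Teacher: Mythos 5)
Your setup is right and matches the paper's opening moves: adjunction gives $\beta(\beta+K_S)=2$, and \eqref{eq:h0} gives $h^0(\beta+K_S)=p_a(\beta)=2>0$, hence effectivity. Writing $\gamma=\beta+K_S$ and $\lambda=(-K_S).\gamma$, adjunction also gives $p_a(\gamma)=\gamma^2=2-\lambda$, so the entire content of the lemma is the single equality $\lambda=2$. This is exactly where your proposal has two genuine gaps. First, your opening claim that $(-K_S).\gamma=w-K_S^2$ ``simplifies to $2$ because very ampleness forces $w$ to take the right value relative to $K_S^2$'' assumes the conclusion: the identity $w=K_S^2+2$ is precisely what is being proved (the paper only invokes it \emph{after} this lemma, in Lemma~\ref{lem:wcg2}). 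Second, your proposed exclusion of $\lambda=1$ is based on a false identification: if $\lambda=1$ then $\gamma^2=2-\lambda=1$, so $\gamma$ is \emph{not} a line class (a line class has self-intersection $-1$), and the contradiction you invoke evaporates. The actual obstruction in the $\lambda=1$ case is subtler: the paper applies the Hodge index theorem to the lattice spanned by $-K_S$ and $\gamma$, getting $K_S^2\cdot\gamma^2\le((-K_S).\gamma)^2=1$, which forces $S=S_8$ and $\gamma=-K_{S_8}$, i.e.\ $\beta=-2K_{S_8}$, which is then excluded because it is not very ample. Without this step the case $\lambda=1$ survives.

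The upper bound $\lambda\le 2$ (equivalently $p_a(\gamma)\ge 0$) is also only asserted, not proved: ``effective $\gamma$ with $\gamma.\beta=2$ and $\beta$ ample forces $(-K_S).\gamma$ small'' is plausible but needs an argument, and the natural one via decomposing $\gamma$ into irreducible components requires a case analysis ($\gamma$ irreducible, $\gamma=C_1+C_2$, $\gamma=2C$) that you do not carry out; the two-component case does not immediately give $p_a(\gamma)\ge 0$. The paper instead argues that $p_a(\gamma)<0$ would force $\gamma$ non-nef (since $\gamma^2=p_a(\gamma)<0$), then derives a contradiction surface by surface: on $\PP^2$ and $\PP^1\times\PP^1$ every effective class is nef; on $S_r$ with $r\ge 2$ a line $l$ with $\gamma.l<0$ gives $\beta.l<1$, contradicting very ampleness; and on $S_1$ an explicit computation with $\beta=(d;a)$ shows $\gamma$ is nef. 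Some version of this (or a completed component-by-component analysis) is needed to close the argument. Your remark on scope is fine, but note the statement places no restriction to $S_r$, and the $S_1$ and $\PP^1\times\PP^1$ cases do require the separate checks above rather than just the line-class criterion of Lemma~\ref{lem:Rocco}(iv).
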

\begin{proof}
We have $\beta. (\beta+K_S)= 2p_a(\beta)-2=2$. By \eqref{eq:h0}, $h^0(\beta+K_S)=2$. Therefore $\beta+K_S$ is effective.

Let $\lambda=(-K_S)(\beta+K_S)> 0$. 
We have 
\begin{align*}
p_a(\beta+K_S) &= \frac12 (\beta+K_S)(\beta+K_S+K_S)+1\\
&= p_a(\beta) + K_S(\beta+K_S)\\
&=2-\lambda.
\end{align*}
Thus, $p_a(\beta+K_S)<2$ and $p_a(\beta+K_S)=(\beta+K_S)^2$. Suppose that $p_a(\beta+K_S)=1$. Then $\lambda=(-K_S)(\beta+K_S)=1$ and $(\beta+K_S)^2=1$. 
By applying the Hodge index theorem to the lattice generated by $-K_S$ and $\beta+K_S$, 
we see that this is possible only if $\beta=-2K_{S_8}$. But $-2K_{S_8}$ is not very ample. 

Now suppose $p_a(\beta+K_S)<0$. In this case, $\beta+K_S$ is not nef. 
Since all effective curve classes on $\PP^2$ or $\pone\times\pone$ are nef, 
we have $S=S_r$ with $r\geq 1$. 
If $r\geq 2$, there is a line $l$ such that $(\beta+K_S)l<0$.
Then $\beta.l<(-K_S).l=1$ which contradicts that $\beta$ is very ample. 
If $r=1$ and $\beta=(d; a)$, we have $d>a>0$ by ampleness, 
and $d\geq a+2$ from $p_a(\beta)=2$. 
Then $(\beta+K_S).E_1=a-1\geq 0$ and $(\beta+K_S).(H-E_1)=d-a-2\geq 0$, 
so $\beta+K_S$ is nef, a contradiction. 

Therefore $p_a(\beta+K_S)=0$ and $(-K_S)(\beta+K_S)=2$. 
\end{proof}

\begin{lemma}\label{lem:wcg2}
 Let $\beta$ be a very ample curve class on $S$ of arithmetic genus 2. Then nontrivial wall-crossings for $M^{\delta}_{\beta, n}$ arise if there is a decomposition
 \begin{equation}\label{eq:g2decomp}(1,(\beta,1))=(1,(\beta_1,0)) +(0,(\beta_2,1)),\end{equation}
where $\beta_1$ and $\beta_2$ are one of the following. 
\begin{itemize}
\item[(i)] $p_a(\beta_1)=1$, $p_a(\beta_2)=0$, $-K_S.\beta_1 =w-2$, $-K_S. \beta_2=2$, $\beta_2^2=0$ and $\beta_1. \beta_2=2$. There is a unique such pair $(\beta_1,\beta_2)$. It corresponds to the wall $\delta_0=\frac12 w-1$. 
\item[(ii)] $p_a(\beta_1)=1$, $p_a(\beta_2)=0$, $-K_S.\beta_1 =w-1$, $-K_S. \beta_2=1$, $\beta_2^2=-1$ and $\beta_1. \beta_2=2$. The number of such pairs $(\beta_1,\beta_2)$ is $2e(S)-8$. They correspond to the wall $\delta_0=w-1$.
\end{itemize}
\end{lemma}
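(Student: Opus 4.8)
The plan is to translate the existence of a wall into numerical conditions on $\beta_1,\beta_2$ and then pin these down with the Hodge index theorem, using crucially that $D:=\beta+K_S$ is a nef conic class. First I would record the constraints coming from \eqref{eq:g2decomp}. Equating reduced slopes at the wall gives $\delta_0=w_1/w_2$ with $w_i=(-K_S).\beta_i>0$, so every numerically admissible decomposition occurs at a genuine positive value of $\delta$ and the only surviving conditions are that $M_{\beta_2,1}$ and $M^{\delta_0}_{\beta_1,0}$ be nonempty. Since $(s_1,F_1)$ is $\delta_0$-stable with $\chi(F_1)=0$ and a nonzero section, the section produces $\mathcal{O}_{C_1}\hookrightarrow F_1$ with $C_1\in|\beta_1|$, forcing $p_a(\beta_1)\ge 1$ exactly as in the proof of Proposition~\ref{local bps g1}. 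Combining with adjunction, $p_a(\beta)=p_a(\beta_1)+p_a(\beta_2)+\beta_1.\beta_2-1=2$, I obtain
\[
p_a(\beta_1)+p_a(\beta_2)+\beta_1.\beta_2=3.
\]

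The key geometric input is Lemma~\ref{lem:vamp}, which says $D=\beta+K_S$ is a conic class; hence $D$ is nef with $D^2=0$ and $D.\beta=\beta^2+K_S.\beta=2$. A short computation from the displayed relation gives $D.\beta_2=1-p_a(\beta_1)+p_a(\beta_2)$, and nefness of $D$ together with $D.\beta_1\ge 0$ confines $D.\beta_2$ to $\{0,1,2\}$. I would then exploit that $D^2=0$ makes the intersection form negative semidefinite on $D^\perp$ with radical $\QQ D$ (the light-cone inequality). If $D.\beta_2=2$ then $\beta_1\in D^\perp$, so $\beta_1^2\le 0$, contradicting $\beta_1^2=2p_a(\beta_1)-2+w_1\ge w_1>0$. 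If $D.\beta_2=0$ then $\beta_2\in D^\perp$ and $p_a(\beta_1)=1+p_a(\beta_2)$: the subcase $\beta_2^2=0$ forces $\beta_2\in\QQ D$ and, by effectivity and $p_a(\beta_1)\ge 1$, uniquely $\beta_2=D=\beta+K_S$ (case (i)), while the subcase $\beta_2^2<0$ forces $w_2=1$, $p_a(\beta_2)=0$, so $\beta_2$ is a line class (case (ii)). The remaining value $D.\beta_2=1$ gives $p_a(\beta_1)=p_a(\beta_2)=:g\ge 1$ and $\beta_1.\beta_2=3-2g$; here I would invoke the Hodge index inequality $\beta_1^2\beta_2^2\le(\beta_1.\beta_2)^2$ (valid because $\beta^2>0$ places a positive class in $\langle\beta_1,\beta_2\rangle$), which reads $(2g-2+w_1)(2g-2+w_2)\le(2g-3)^2$. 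For $g\ge 2$ the left side already exceeds the right, and for $g=1$ it forces $w_1=w_2=1$, whence the Gram matrix of $\{-K_S,\beta\}$ degenerates and $\beta=-2K_{S_8}$, which is excluded by hypothesis. This leaves exactly cases (i) and (ii), and in each the remaining intersection numbers follow from the displayed relation and $\beta_i^2=2p_a(\beta_i)-2+w_i$.

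It then remains to count. Case (i) is unique by the argument above. In case (ii) the admissible line classes are precisely those $l$ with $(\beta+K_S).l=D.l=0$, i.e.\ the $(-1)$-curves lying in fibers of the ruling $|D|\colon S\to\PP^1$. Since $-K_S$ is ample with $(-K_S).D=2$, each reducible fiber is a union of two lines meeting once; comparing Euler characteristics of this conic bundle ($e(S)=2\cdot 2+(\text{number of reducible fibers})$) gives $e(S)-4$ reducible fibers, hence $2e(S)-8$ such lines, as claimed.

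The step I expect to be the main obstacle is the elimination of the middle case $D.\beta_2=1$: it is fully consistent with the displayed relation and with both moduli spaces being nonempty, so it cannot be excluded on dimensional or genus grounds and must be killed by the Hodge index inequality, with the degenerate boundary $w_1=w_2=1$ carefully traced back to the excluded class $-2K_{S_8}$. A secondary point requiring care is the clean justification that a $\delta_0$-stable pair with $\chi(F_1)=0$ and a section forces $p_a(\beta_1)\ge 1$, for which I would reuse the argument already employed in the genus one case.
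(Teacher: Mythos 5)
Your argument is correct in substance but takes a genuinely different route from the paper's. The paper's proof is a finite enumeration: by Lemma~\ref{lem:vamp} every very ample $\beta$ with $p_a(\beta)=2$ is $D-K_S$ for one of the finitely many conic classes $D$ listed in Example~\ref{exam:g0w2}, and the authors simply check the admissible decompositions class by class, displaying only the sample case $\beta=(4;2,1^4)$ on $S_5$. You instead give a uniform lattice-theoretic argument: writing $D=\beta+K_S$ (nef, isotropic, $D.\beta=2$) splits the analysis into $D.\beta_2\in\{0,1,2\}$; negative semidefiniteness of the form on $D^{\perp}$ kills $D.\beta_2=2$ and isolates cases (i) and (ii) from $D.\beta_2=0$; and the Hodge index inequality eliminates $D.\beta_2=1$ except for the boundary configuration $w_1=w_2=1$, which forces $\beta=-2K_{S_8}$ and is excluded by very ampleness. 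Your count $2e(S)-8$ via the Euler characteristic of the conic bundle $|D|\colon S\to\PP^1$ is likewise a clean uniform replacement for the paper's tabulation, and I verified it agrees with the paper's worked example ($e(S_5)=8$ giving $8$ decompositions of type (ii)). What your route buys is a conceptual proof independent of the completeness of the list in Example~\ref{exam:g0w2}; what the paper's route buys is that the case check simultaneously confirms nonemptiness of the relevant moduli spaces. Two steps in your write-up deserve an extra sentence. First, in the subcase $D.\beta_2=0$, $\beta_2^2<0$ you assert $w_2=1$, $p_a(\beta_2)=0$ without excluding $p_a(\beta_2)<0$; this follows from $\dim M_{\beta_2,1}=\beta_2^2+1\ge 0$ (so $\beta_2^2=-1$) together with the observation that an effective class orthogonal to $D$ is supported on fibres of the ruling, and a connected such configuration with self-intersection $-1$ must be a single line. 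Second, the inference $p_a(\beta_1)\ge 1$ from a $\delta_0$-stable pair with $\chi=0$ requires noting that the image of the section is $\shO_{C_1}$ with $[C_1]$ a priori only a subclass of $\beta_1$; the genus two analogue of the paper's genus one argument handles this, but it is the one place where stability (rather than pure numerics) enters and should not be waved through.
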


\begin{proof}
By the previous lemma, we have 
$(-K_S).\beta=K_S^2+2>2$. 
The list of all very ample classes of arithmetic genus 2 can be obtained by Lemma~\ref{lem:vamp} and Example~\ref{exam:g0w2}. We can check the assertions for each curve classes. For example, if $\beta=(4;2,1,1,1,1)$, then the possible decompositions of $\beta$ on $S_5$ are
\begin{itemize}
\item $ (4;2,1,1,1,1)=(3;1,1,1,1,1)+(1;1,0,0,0,0)$,
\item $(4;2,1,1,1,1)=(3;1,1,1,1,0)+(1;1,0,0,0,1)$ (4 decompositions of this type),
\item $(4;2,1,1,1,1)=(4;2,2,1,1,1)+E_2$ (4 decompositions of this type).
\end{itemize}
The first decomposition is the case (i) of the statement and the remaining two correspond to the case (ii). The other cases can be checked similarly.
\end{proof}

\begin{proposition}\label{prop:ng2}
Let $\beta$ be a curve class on $S=S_r$ of arithmetic genus 2, and assume that $\beta$ is very ample. Then $$P_t(M_\beta)=\displaystyle\frac{1-t^w}{1-t}\left(1+(e(S)-2)t + \left(\binom{e(S)-2}{2}+4\right)t^2+(e(S)-2)t^3+t^4 \right)$$ and $n_{\beta}=(-1)^{w-1} w \left(\binom{\euler(S)}{2}+5\right)$.
\end{proposition}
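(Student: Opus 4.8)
The plan is to compute $P_t(M_\beta)$ through Proposition~\ref{prop:eulermd}, which expresses it as $P_t(M^{0^+}_{\beta,1})-t\,P_t(M^{0^+}_{\beta,-1})$, and to obtain each term by wall-crossing starting from the Pandharipande--Thomas moduli spaces $M^\infty_{\beta,n}=P_n(S,\beta)$. Since $\beta$ is very ample (hence $1$-very ample) and $p_a(\beta)=2$, Proposition~\ref{prop:PT}(ii) realizes $M^\infty_{\beta,1}=P_1(S,\beta)$ as a $\PP^{w-1}$-bundle over $\mathrm{Hilb}^2(S)$, so I would first record $P_t(\mathrm{Hilb}^2(S))$. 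Writing $b=e(S)-2$ for the middle Betti number and viewing $\mathrm{Hilb}^2(S)$ as the quotient by the swap involution of the blow-up of $S\times S$ along the diagonal, one finds $P_t(\mathrm{Hilb}^2(S))=1+(b+1)t+\binom{b+2}{2}t^2+(b+1)t^3+t^4$, whence $P_t(M^\infty_{\beta,1})=\frac{1-t^w}{1-t}\,P_t(\mathrm{Hilb}^2(S))$. On the other side, $n-1+p_a=0$ for $n=-1$, so the $(C,Z)$-description used in Proposition~\ref{prop:PT} gives $M^\infty_{\beta,-1}=|\beta|\simeq\PP^{w+1}$.

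Next I would run the wall-crossing from $\delta=\infty$ down to $\delta=0^+$. For $n=1$, Lemma~\ref{lem:wcg2} guarantees that the only walls are the single type-(i) decomposition at $\delta_0=\tfrac12 w-1$ and the $2e(S)-8$ type-(ii) decompositions at $\delta_0=w-1$ (the latter crossed first, as $w-1>\tfrac12 w-1$). At each simple wall of the form \eqref{eq:g2decomp} the space undergoes an elementary modification: over the product of building-block moduli, the locus $\PP(\Ext^1((s',F'),(0,F'')))$ is replaced by $\PP(\Ext^1((0,F''),(s',F')))$. Here the building blocks are the genus-$1$ pair $(s',\shO_{C_1})$ of Euler characteristic $0$, whose moduli is the linear system $|\beta_1|$, and the stable sheaf $\shO_{C_2}$ of class $\beta_2$, whose moduli is $M_{\beta_2}\simeq\PP^{w_2-1}$ by Proposition~\ref{prop:g0}; the product base is thus $\PP^{w-1}\times\{\mathrm{pt}\}$ in type (ii) and $\PP^{w-2}\times\PP^1$ in type (i). The key computation is to evaluate the two relevant $\Ext^1$ groups with Proposition~\ref{prop:sespair}: using the Koszul resolution $0\to\shO_S(-\beta_1)\to\shO_S\to\shO_{C_1}\to0$, Serre duality, and $\beta_1.\beta_2=2$, I expect $\dim\Ext^1((s',F'),(0,F''))=3$ and $\dim\Ext^1((0,F''),(s',F'))=2$ at both wall types, so that each crossing trades a $\PP^2$-bundle for a $\PP^1$-bundle and hence subtracts $P_t(B)\,t^2$ from the virtual Poincar\'e polynomial, with $B$ the base above.

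For $n=-1$ I would argue that no wall lies between $\delta=\infty$ and $\delta=0^+$, so that $M^{0^+}_{\beta,-1}=\PP^{w+1}$. A destabilizing decomposition $(1,(\beta,-1))=(1,(\beta_1,n_1))+(0,(\beta_2,n_2))$ with positive wall value forces $n_2\ge 0$; tracking arithmetic genera through $p_a(\beta_1)+p_a(\beta_2)+\beta_1.\beta_2=3$ then pushes $p_a(\beta_1)$ to at least $2$, so $\beta_1^2=(-K_S).\beta_1+2>0$, while ampleness together with the very-ampleness of $\beta$ keeps $\beta_1.\beta_2$ too small to be compatible with the Hodge index theorem applied to the sublattice $\langle\beta_1,\beta_2\rangle$. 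Feeding the three inputs into Proposition~\ref{prop:eulermd}, the $\tfrac12 w-1$ and $w-1$ contributions together with the $n=-1$ term collapse, after a short manipulation of geometric sums, to $P_t(M_\beta)=\frac{1-t^w}{1-t}\bigl(1+(e(S)-2)t+(\binom{e(S)-2}{2}+4)t^2+(e(S)-2)t^3+t^4\bigr)$. Evaluating at $t=1$ and using Proposition~\ref{prop:bps equiv} together with $(-1)^{\beta^2+1}=(-1)^{w-1}$ (as $\beta^2=w+2$) then yields $n_\beta=(-1)^{w-1}w(\binom{e(S)}{2}+5)$.

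The step I expect to be the main obstacle is the wall-crossing bookkeeping, and within it the two $\Ext^1$ computations at each wall via Proposition~\ref{prop:sespair}. The delicate points are the injectivity of the restriction map $H^0(\shO_{C_2})\to H^0(\shO_{C_2}(\beta_1))$ (multiplication by the section cutting out $C_1$, nonzero because $\beta_1.\beta_2=2$), and the vanishing of the connecting map $\Ext^1(\shO_{C_2},\shO_{C_1})\to H^1(\shO_{C_1})$, which holds precisely because the section of the coherent system $(0,F'')$ is zero; getting these right is what fixes the fibre dimensions $2$ and $1$ and hence the coefficient $\binom{e(S)-2}{2}+4$. A secondary but essential point is the clean exclusion of $n=-1$ walls, where the Hodge-index argument above, mirroring the very-ampleness constraints already used in Lemma~\ref{lem:wcg2}, should be the crux.
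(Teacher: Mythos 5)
Your proposal is correct and follows essentially the same route as the paper: the identity $P_t(M_\beta)=P_t(M^{0^+}_{\beta,1})-tP_t(M^{0^+}_{\beta,-1})$, the identification of $P_1(S,\beta)$ as a $\PP^{w-1}$-bundle over $\mathrm{Hilb}^2(S)$ and of $P_{-1}(S,\beta)$ with $\PP^{w+1}$, the two wall types of Lemma~\ref{lem:wcg2} with bases $\PP^{w-2}\times\PP^1$ and $\PP^{w-1}$, and the $\Ext^1$ dimensions $3$ and $2$ giving a correction of $t^2P_t(B)$ at each wall all match the paper's argument. The only addition is your explicit (and welcome) justification that no walls occur for $n=-1$, a point the paper passes over silently.
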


\begin{proof}
By Proposition~\ref{prop:PT}, $P_t(P_{\beta,1}) = P_t(\PP^{w-1})P_t(\mathrm{Hilb}^2(S))$ and $P_t(P_{\beta,-1}) = P_t(\PP^{w+1})$. We have a wall-crossing for each decomposition in Lemma~\ref{lem:wcg2}. By the similar calculation as before we compute the wall-crossing. 
For the decomposition in Lemma~\ref{lem:wcg2}(1),
\begin{align*}
\Ext^1(  (s, \shO_{\beta_1}), (0, \shO_{\beta_2})   )&\simeq  \CC^3,\\
\Ext^1( (0, \shO_{\beta_2}) , (s, \shO_{\beta_1})  )&\simeq  \CC^2.
\end{align*}
Since $(s, \shO_{\beta_1})\in M^\infty_{{\beta_1},0}\simeq \IP^{w-2}$ and $(0, \shO_{\beta_2}) \in M_{{\beta_2}, 1}\simeq \IP^1$, the correction term for the Poincar\'{e} polynomial in this case is $t^2P_t(\PP^{w-2}) P_t(\pone)$.

For the decomposition in Lemma~\ref{lem:wcg2}(2),
\begin{align*}
\Ext^1(  (s, \shO_{\beta_1}), (0, \shO_{\beta_2})   )&\simeq  \CC^3,\\
\Ext^1( (0, \shO_{\beta_2}) , (s, \shO_{\beta_1})  )&\simeq  \CC^2.
\end{align*}
In this case $(s, \shO_{\beta_1})\in M^\infty_{{\beta_1},0}\simeq \IP^{w-1}$ and $(0, \shO_{\beta_2}) \in M_{{\beta_2}, 1}\simeq \IP^0$. So, the correction term in this case is $(2\euler(S)-8)t^2 P_t(\PP^{w-1})$.

Therefore, we have
\[
P_t(M_\beta) =  P_t(\PP^{w-1})P_t(\mathrm{Hilb}^2(S)) - P_t(\PP^{w+1})- t^2P_t(\PP^{w-2}) P_t(\pone)-(2\euler(S)-8)t^2 P_t(\PP^{w-1}).\]

The Poincar\'{e} polynomial of the Hilbert scheme is well known \cite{hilb}. For the Hilbert scheme of two points, we have 
\[P_t(\mathrm{Hilb}^2(S))=1+(\euler(S)-1)t+ \binom{\euler(S)}{2} t^2 +(\euler(S)-1)t^3 +t^4.\]
Then the result follows from elementary calculations. 
\end{proof}

\begin{remark}
Without the very-ampleness assumption, we can calculate $P_t(M_\beta)$ and $n_\beta$ by using the blowup property. If $\beta$ is nef and big but not very ample, then we may blow down all lines $l$ with $\beta.l=0$. Let $\pi:S\to S'$ be the blowdown. After blowdown, $\pi_*\beta$ is very ample unless $\beta=-2K_{S_8}$, since there are no $(-1)$-curves which do not intersect $\beta$. Therefore we may apply Proposition~\ref{prop:ng2} to calculate $n_{\pi_*\beta}$ on $S'$. Then by Proposition~\ref{prop:pullback}, $ M_\beta\simeq M_{\pi_*\beta}$. Hence if we let $\eta$ be the number of lines $l$ such that $\beta. l=0$ as before, we conclude that
 $$P_t(M_\beta)=\displaystyle\frac{1-t^w}{1-t}\left(1+(e(S)-2-\eta)t + \left(\binom{e(S)-2-\eta}{2}+4\right)t^2+(e(S)-2-\eta)t^3+t^4 \right)$$ and 
\[n_{\beta}=(-1)^{w-1} w \left(\binom{\euler(S)-\eta}{2}+5\right). \]
\end{remark}

\begin{remark} 
For $\PP^1\times \PP^1$, we can check the only very ample classes with arithmetic genus 2 are $2h_1+3h_2$ and $3h_1 +2h_2$. The same calculation works for these classes and we have 
\[P_t(M_\beta)=\displaystyle\frac{1-t^{10}}{1-t}(1+2t +5t^2+2t^3+t^4),\]
which matches with the result of Proposition~\ref{prop:ng2} as $\euler(\pone\times\pone)=4$. 

For these cases, the geometry of $M_\beta$ is studied in \cite{Moon}. We remark that the Poincar\'{e} polynomial obtained in \cite[Corollary 3.8]{Moon} using different birational method agrees with ours. 
\end{remark}

\begin{remark}
Let $\beta=-2K_{S_8}=(6,2^8)$. This curve class is neither very ample nor contracted to a very ample divisor. So, it is not covered by Proposition~\ref{prop:ng2}.

\end{remark}

In conclusion, we have the following formulas for the Poincar\'{e} polynomials and the local BPS invariants.

\begin{Theorem}\label{thm:poincare}
Let $\beta$ be either a line class, a conic class or a nef and big curve class on a del Pezzo surface $S$ of arithmetic genus at most 2. Let $w=(-K_S).\beta$ and let $\eta$ be the number of disjoint lines $l$ such that $\beta. l =0$. Then we have
\begin{itemize}
\item[(i)] if $p_a(\beta)=0$, then $P_t(M_\beta)=\displaystyle\frac{1-t^w}{1-t}$,
\item[(ii)] if $p_a(\beta)=1$ and $\beta\ne -K_{S_8}$, then $P_t(M_\beta)=\displaystyle\frac{1-t^w}{1-t}\left(1+(\euler(S)-2-\eta)t +t^2\right)$,
\item[(iii)] if $\beta=-K_{S_8}$, then $P_t(M_\beta)=1+10t+t^2 $,
\item[(iv)] if $p_a(\beta)=2$ and $\beta\ne -2K_{S_8}$, then $$P_t(M_\beta)=\displaystyle\frac{1-t^w}{1-t}\left(1+(e(S)-2-\eta)t + \left(\binom{e(S)-2-\eta}{2}+4\right)t^2+(e(S)-2-\eta)t^3+t^4 \right).$$
\end{itemize}
\end{Theorem}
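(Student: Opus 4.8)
The plan is to assemble the four cases directly from the results established in §\ref{sec:localg0}--§\ref{sec:localg2}, in each case reducing a general nef and big class to the very ample (resp.\ base point free) situation via the blowup invariance of Proposition~\ref{prop:pullback}. Throughout I would use that $w=(-K_S).\beta$ is unchanged under blowing down a $(-1)$-curve $l$ with $\beta.l=0$ (since $-K_{S_{r+1}}=\pi^*(-K_{S_r})-E$ and $E.\pi^*\beta=0$), and that each such blowdown decreases the topological Euler characteristic by $1$, so that blowing down $\eta$ disjoint lines replaces $e(S)$ by $e(S)-\eta$.

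For $p_a(\beta)=0$ [case (i)], I would invoke Proposition~\ref{prop:g0}, which identifies $M_\beta$ with the complete linear system $\PP^{w-1}$; its Poincaré polynomial is $(1-t^w)/(1-t)$, and no $\eta$-correction appears because the identification holds for the class $\beta$ itself. For $p_a(\beta)=1$ with $\beta\ne-K_{S_8}$ [case (ii)], the statement is exactly Proposition~\ref{local bps g1}: the $\eta$ lines $l$ with $\beta.l=0$ are mutually disjoint by the Hodge-index argument preceding that proposition, and blowing them down identifies $M_\beta$ with a $\PP^{w-1}$-bundle over a del Pezzo surface $S'$ with $-K_{S'}$ base point free and $e(S')=e(S)-\eta$; property (iv) of the virtual Poincaré polynomial then yields the formula. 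The degenerate class $\beta=-K_{S_8}$ [case (iii)] is treated directly in Example~\ref{exam:KS8}, where $M_\beta$ is $\PP^2$ blown up at the nine base points of the anticanonical pencil, giving $1+10t+t^2$.

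Finally, for $p_a(\beta)=2$ with $\beta\ne-2K_{S_8}$ [case (iv)], I would combine Proposition~\ref{prop:ng2}, which handles the very ample case (where $\eta=0$), with the reduction in the remark following it: when $\beta$ is nef and big but not very ample, blow down the $\eta$ disjoint lines $l$ with $\beta.l=0$ to reach a surface $S'$ on which $\pi_*\beta$ is very ample, apply Proposition~\ref{prop:ng2} on $S'$ with $e(S')=e(S)-\eta$, and transport the answer back by Proposition~\ref{prop:pullback}. The point requiring the most care, and the main obstacle, is verifying that after contracting \emph{all} lines meeting $\beta$ in degree zero the pushed-forward class is genuinely very ample (equivalently, that no remaining $(-1)$-curve meets $\pi_*\beta$ in zero); this rests on the numerical criterion of Lemma~\ref{lem:Rocco} together with the classification of very ample genus-$2$ classes furnished by Lemma~\ref{lem:vamp} and Example~\ref{exam:g0w2}, and it is precisely the mechanism that forces the exclusion of $-2K_{S_8}$.
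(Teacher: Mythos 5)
Your proposal is correct and matches the paper's own (largely implicit) proof: the theorem is assembled exactly as you describe from Proposition~\ref{prop:g0} and its corollary, Proposition~\ref{local bps g1}, Example~\ref{exam:KS8}, and Proposition~\ref{prop:ng2} together with the remark following it, with the reduction to the base point free or very ample case via Proposition~\ref{prop:pullback} and the invariance of $w$ under contracting lines orthogonal to $\beta$. You also correctly isolate the one point needing care in case (iv), namely that $\pi_*\beta$ becomes very ample after the blowdown except for $-2K_{S_8}$, which is exactly how the paper handles it via Lemma~\ref{lem:Rocco}.
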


\begin{Theorem}\label{thm:localbps}
In the situation as in Theorem~\ref{thm:poincare}, we have 
\begin{itemize}
\item[(i)] if $p_a(\beta)=0$, then $n_\beta= (-1)^{w-1} w$,
\item[(ii)] if $p_a(\beta)=1$ and $\beta\ne -K_{S_8}$, then $n_\beta= (-1)^{w-1} w (e(S)-\eta)$,
\item[(iii)] if $\beta=-K_{S_8}$, then $n_\beta=12$,
\item[(iv)] if $p_a(\beta)=2$ and $\beta\ne -2K_{S_8}$, then $n_\beta= (-1)^{w-1} w \left(\binom{\euler(S)-\eta}{2}+5\right)$.
\end{itemize}
\end{Theorem}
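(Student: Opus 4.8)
The plan is to deduce Theorem~\ref{thm:localbps} directly from the Poincar\'{e} polynomial computations of Theorem~\ref{thm:poincare} by specializing $t=1$. By Proposition~\ref{prop:bps equiv}, the genus zero local BPS invariant is $n_\beta = (-1)^{\beta^2+1} e(M_\beta)$, and the Euler characteristic is recovered as $e(M_\beta) = P_1(M_\beta)$, since in all our cases the odd cohomology of $M_\beta$ vanishes and the virtual Poincar\'{e} polynomial coincides with the genuine one. Thus the bulk of the work is already contained in Theorem~\ref{thm:poincare}, and what remains is a sign bookkeeping and a substitution.

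First I would pin down the sign. Adjunction gives $\beta^2 = \beta(\beta+K_S) - \beta.K_S = 2p_a(\beta)-2 + w$, where $w=(-K_S).\beta$, so $\beta^2+1 = 2p_a(\beta)-1+w$. Since $2p_a(\beta)$ is even, $(-1)^{\beta^2+1} = (-1)^{w-1}$, which accounts uniformly for the prefactor $(-1)^{w-1}$ appearing in cases (i), (ii), and (iv). Next I would substitute $t=1$ into each factored Poincar\'{e} polynomial: the prefactor $\frac{1-t^w}{1-t}$ specializes to $w$, while the palindromic second factor specializes to the integer recorded below. In case (i) the second factor is $1$, giving $n_\beta=(-1)^{w-1}w$. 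In case (ii) it is $1+(e(S)-2-\eta)+1 = e(S)-\eta$, giving $n_\beta=(-1)^{w-1}w(e(S)-\eta)$. In case (iv) it is $1+(e(S)-2-\eta)+\left(\binom{e(S)-2-\eta}{2}+4\right)+(e(S)-2-\eta)+1$; after collecting terms one checks this equals $\binom{e(S)-\eta}{2}+5$, using the identity $\binom{m}{2}+2m+6 = \binom{m+2}{2}+5$ with $m=e(S)-2-\eta$, yielding $n_\beta=(-1)^{w-1}w\left(\binom{e(S)-\eta}{2}+5\right)$.

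For the exceptional case (iii), where $\beta=-K_{S_8}$, I would invoke the direct computation of Example~\ref{exam:KS8}: there $P_t(M_\beta)=1+10t+t^2$, so $e(M_\beta)=P_1(M_\beta)=12$. Here $w=K_{S_8}^2=1$, so the sign $(-1)^{w-1}=(-1)^0=1$ is positive and $n_\beta=12$, consistent with the stated value $e(S_8)+1=11+1=12$. This case must be handled separately precisely because $-K_{S_8}$ is nef and big but not $0$-very ample, so the projective-bundle and wall-crossing machinery underlying Theorem~\ref{thm:poincare}(ii) does not apply.

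The only genuine subtlety, and the step I expect to require the most care, is the binomial simplification in case (iv): one must verify that the alternating evaluation of the degree-four palindromic factor at $t=1$ collapses to the clean form $\binom{e(S)-\eta}{2}+5$, and in particular that the $+4$ in the middle coefficient combines with the two linear contributions of size $e(S)-2-\eta$ to regenerate a binomial coefficient with argument shifted from $e(S)-2-\eta$ up to $e(S)-\eta$. Everything else is an immediate specialization, so no further geometric input beyond Theorem~\ref{thm:poincare}, Proposition~\ref{prop:bps equiv}, and Example~\ref{exam:KS8} is needed.
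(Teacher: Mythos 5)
Your proposal is correct and follows exactly the route the paper takes: Theorem~\ref{thm:localbps} is obtained from Theorem~\ref{thm:poincare} by evaluating $P_t(M_\beta)$ at $t=1$, using $n_\beta=(-1)^{\beta^2+1}e(M_\beta)$ from Proposition~\ref{prop:bps equiv} together with the adjunction identity $\beta^2+1=2p_a(\beta)-1+w$ for the sign, and Example~\ref{exam:KS8} for the exceptional case. Your binomial simplification in case (iv) is also verified correctly.
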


In all cases studied in this paper, we see that $P_t(M_\beta)$ has a factor of $P_t(\PP^{w-1})=\displaystyle\frac{1-t^w}{1-t}$. This may suggest that $M_\beta$ has a projective bundle structure. However, it is not true in general. The stable base locus decomposition of $M_\beta$ when $S=\PP^2$ is studied in \cite{cc2}. It is shown there that $M_\beta$ is not itself a projective bundle but is birational to a projective bundle. We formulate the following conjecture, which we proved for $\beta$ of arithmetic genus at most 2.. 

\begin{Conjecture}\label{conj:div}
Let $\beta$ be either a line class, a conic class or a nef and big curve class on a del Pezzo surface $S$. Let $w=(-K_S).\beta$. Then $P_t(M_\beta)$ has a factor of  $P_t(\PP^{w-1})$ and the quotient $P_t(M_\beta)/P_t(\PP^{w-1})$ is a palindromic polynomial. Consequently, $n_\beta$ is divisible by $w$. 
\end{Conjecture}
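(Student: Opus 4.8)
The plan is to deduce the statement for curve classes of arithmetic genus at most $2$ directly from the explicit formulas of Theorem~\ref{thm:poincare}, and then to indicate the structural reason one expects the factorization to persist for all $\beta$. For $p_a(\beta)\le 2$ each of the four formulas in Theorem~\ref{thm:poincare} already presents $P_t(M_\beta)$ as $\frac{1-t^w}{1-t}$ times an explicit polynomial $Q(t)$, and since $\frac{1-t^w}{1-t}=1+t+\cdots+t^{w-1}=P_t(\PP^{w-1})$, divisibility by $P_t(\PP^{w-1})$ is immediate. It then remains only to verify that each quotient $Q(t)$ is palindromic and to read off divisibility of $n_\beta$ by $w$.

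First I would dispatch palindromicity by inspection: in case (i) $Q=1$; in case (ii) $Q=1+(e(S)-2-\eta)t+t^2$; in case (iii) one has $w=1$, so $P_t(\PP^{w-1})=1$ and $Q=1+10t+t^2$; in case (iv) $Q=1+at+\bigl(\binom{a}{2}+4\bigr)t^2+at^3+t^4$ with $a=e(S)-2-\eta$. Each has a symmetric coefficient sequence and is therefore palindromic. For divisibility of $n_\beta$ I would evaluate at $t=1$: one has $P_1(\PP^{w-1})=w$, while by Proposition~\ref{prop:bps equiv} together with the vanishing of odd cohomology one has $n_\beta=(-1)^{\beta^2+1}e(M_\beta)=(-1)^{\beta^2+1}P_1(M_\beta)$. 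Hence $n_\beta=\pm\,w\,Q(1)$ with $Q(1)\in\ZZ$, which gives $w\mid n_\beta$; computing $Q(1)$ in each case recovers the four values of Theorem~\ref{thm:localbps}.

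For the general conjecture I would try to produce the factor $P_t(\PP^{w-1})$ conceptually rather than case by case. The natural starting point is the stable-pair side: after blowing down all $(-1)$-curves $l$ with $\beta.l=0$ (Proposition~\ref{prop:pullback}), Proposition~\ref{prop:PT} exhibits $P_1(S,\beta)=M^{\infty}_{\beta,1}$ as a $\PP^{w-1}$-bundle over $\mathrm{Hilb}^{p_a}(S)$, so there $P_t(M^{\infty}_{\beta,1})=P_t(\PP^{w-1})\,P_t(\mathrm{Hilb}^{p_a}(S))$; the factor is present and the quotient $P_t(\mathrm{Hilb}^{p_a}(S))$ is palindromic by Poincar\'e duality on the smooth projective Hilbert scheme. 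The program is then to run the wall-crossing of $\delta$-stable pairs from $\delta=\infty$ down to $\delta=0^{+}$, showing at each simple wall that the correction to $P_t$ is again divisible by $P_t(\PP^{w-1})$ with palindromic quotient, and finally to transport the conclusion to $M_\beta$ through Proposition~\ref{prop:eulermd}.

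The main obstacle is precisely this wall-by-wall control for arbitrary $\beta$. For $p_a\le 2$ the relevant walls are finite in number and completely enumerated (as in Lemma~\ref{lem:wcg2}), so each correction term can be checked to carry the factor $P_t(\PP^{w-1})$ explicitly; but for large $p_a$ the number and combinatorial type of the walls grow, and I do not see an a priori reason that every individual correction is divisible by $P_t(\PP^{w-1})$ with a palindromic cofactor. A cleaner route would be to locate a geometric mechanism on $M_\beta$ itself forcing the factor, for instance a birational model that is a genuine $\PP^{w-1}$-bundle in the spirit of the stable base locus analysis of \cite{cc2}, from which both divisibility and the palindromic symmetry (via Poincar\'e duality on the total space) would follow at once. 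Establishing such a structure uniformly in $\beta$ is what keeps the full statement at the level of a conjecture.
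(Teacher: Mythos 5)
Your proposal matches the paper's own treatment: the conjecture is established for $p_a(\beta)\le 2$ exactly by reading the factorization $P_t(M_\beta)=P_t(\PP^{w-1})\,Q(t)$ off the explicit formulas of Theorem~\ref{thm:poincare}, checking that each quotient $Q(t)$ is palindromic by inspection, and evaluating at $t=1$ to get $w\mid n_\beta$, while the general case remains open for essentially the reasons you describe (the paper likewise points to \cite{cc2} for the birational projective-bundle structure). Your verification and your assessment of where the difficulty lies for general $\beta$ are both consistent with the paper.
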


This conjecture motivated the theory of log BPS numbers. 

\begin{definition}\label{def:logbps}
We define the log BPS numbers by $m_\beta= (-1)^{w-1}n_\beta/w$. 
\end{definition}

\begin{remark}
In a sequel \cite{logbps}, we give a more geometric approach to the log BPS numbers. We fix a smooth anticanonical divisor $E\in |-K_S|$. The set $E(\beta)$ of points $P$ on $E$ such that there is a curve in class $\beta$ meeting $E$ only at $P$ is a finite set. Roughly speaking, given a point $P\in E(\beta)$, the log BPS number counts the virtual number of rational curves in class $\beta$ which meet $E$ only at $P$ and are smooth at $P$. In \cite{logbps}, we give a precise definition of log BPS numbers using log Gromov-Witten theory and conjecture that it is constant along points $P\in E(\beta)$. When $P\in E(\beta)$ is \emph{$\beta$-primitive}, which means that there are only reduced irreducible rational curves in class $\beta$ meeting $E$ only at $P$, this is an actual count of curves. In this case, we show that the log BPS numbers of Definition~\ref{def:logbps} agree with the number of such rational curves when $\beta$ has arithmetic genus at most 2. 
\end{remark}

\subsection{Refined BPS indices and higher genus BPS invariants}\label{sec:refined}

In \cite{GV2}, physical reasoning was used to assert an $sl_2\times sl_2$-representation on the cohomology $H^*(M_\beta)$ of the moduli space $M_\beta$ which refines the Gopakumar-Vafa invariants. The left and the right $sl_2$-actions are given by the Lefschetz actions from the maps $M_\beta\to |\beta| \to pt$ respectively. A mathematical proposal for an $sl_2\times sl_2$-representation was given in \cite{KL}.  While a counterexample to this proposal was found in \cite{MT}, the problem does not occur for smooth moduli spaces.  We therefore can and will use the proposal of \cite{KL} as a precise mathematical definition.

A computational algorithm for such $sl_2\times sl_2$-representations based on conjectures from physics was developed in \cite{kkv} and generalized in \cite{ckk} using the refined PT invariants. Adapting the notations in \cite{ckk} we let $[\frac{k}{2}]$ denote the irreducible $sl_2$-representation of dimension $k+1$. Then we may write $H^*(M_\beta)= \sum_{j_L, j_R} N_{j_L, j_R}^\beta [j_L, j_R]$ as an $sl_2\times sl_2$-representation, where $j_L, j_R\in \frac12 \mathbb{Z}$. The multiplicity $N_{j_L, j_R}^\beta $ is called the \emph{refined BPS index}. There is a conjectural product formula for the generating function of the refined PT invariant in terms of the refined BPS indices, see \cite[\S 8]{ckk}.

The computation of the previous sections can be applied to the calculation algorithms in \cite{ckk} of the refined BPS indices. We present the results here omitting the details. In the following, we assume $[\frac{k}{2}]=0$ for $k<0$.

Let $r=e(S)-3$ as before. (For $S=\PP^1\times \PP^1$, $r=1$.)
\begin{itemize}
\item[(i)] If $p_a(\beta)=0$, then $H^*(M_\beta)= [0, \frac{w-1}{2}]$.
\item[(ii)] If $p_a(\beta)=1$ and $\beta\ne -K_{S_8}$, then $H^*(M_\beta)= [\frac12, \frac{w}{2}] + (r-\eta)[0, \frac{w-1}{2}] + [0, \frac{w-3}{2}]$.
\item[(iii)] If $\beta=-K_{S_8}$, then $H^*(M_\beta)=[\frac12,\frac12]+ 8[0,0]$. 
\item[(iv)] If $p_a(\beta)=2$ and $\beta\ne -2K_{S_8}$, then $H^*(M_\beta)=[1, \frac{w+1}{2}] +(r-\eta)[\frac12, \frac{w}{2}] +[\frac12, \frac{w-2}{2}] + \left(\binom{r-\eta}{2} +2\right) [0,\frac{w-1}{2}]+ (r-\eta)[0, \frac{w-3}{2}]+[0, \frac{w-5}{2}]$.
\end{itemize}

These results are consistent with the refined BPS indices obtained by mirror symmetry in \cite[\S 5]{HKP}. We remark that $ N_{j_L, j_R}^d$ in \cite[\S 5]{HKP} is $\sum_{(-K_S).\beta=d}  N_{j_L, j_R}^\beta$.

Upon restricting to the representation $(H^*(M_\beta))_\Delta$ of the diagonal $(sl_2)_\Delta \subset sl_2\times sl_2$, we recover the cohomology of $M_\beta$. By simple computation, we see that 
 
\begin{itemize}
\item[(i)] if $p_a(\beta)=0$, then $(H^*(M_\beta))_\Delta= [\frac{w-1}{2}]$,
\item[(ii)] if $p_a(\beta)=1$ and $\beta\ne -K_{S_8}$, then $(H^*(M_\beta))_\Delta= [\frac{w-1}{2}]([1]+(r-\eta)[0] )$,
\item[(iii)] if $\beta=-K_{S_8}$, then $(H^*(M_\beta))_\Delta=[1]+ 9[0]$,
\item[(iv)] if $p_a(\beta)=2$ and $\beta\ne -2K_{S_8}$, then $(H^*(M_\beta))_\Delta=[\frac{w-1}{2}]\left([2] +(r-\eta) [1] +\left(\binom{r-\eta}{2} +3\right)[0] \right)$.
\end{itemize}

In each case, $(H^*(M_\beta))_\Delta$ is divisible by $[\frac{w-1}2]$, consistent with Conjecture~\ref{conj:div}, as $[\frac{w-1}2]$ is the Lefschetz representation of $\PP^{w-1}$.

\end{document}